\newcommand {\sectionnew}[1]{\section{#1}}
\newcommand{\beq}{\begin{equation}}
\newcommand{\eeq}{\end{equation}}
\newcommand{\beqa}{\begin{eqnarray}}
\newcommand{\eeqa}{\end{eqnarray}}
\newcommand{\beaa}{\begin{eqnarray*}}
\newcommand{\ben}{\begin{eqnarray*}}
\newcommand{\eaa}{\end{eqnarray*}}
\newcommand{\een}{\end{eqnarray*}}
\newcommand \nc {\newcommand}
\newtheorem{theorem}{Theorem}
\newtheorem{lemma}[theorem]{Lemma}
\newtheorem{corollary}[theorem]{Corollary}
\nc \thref{Theorem \ref}
\nc \leref{Lemma \ref}
\nc \prref{Proposition \ref}
\nc \coref{Corollary \ref}
\nc \deref{Definition \ref}
\nc \exref{Example \ref}
\nc \reref{Remark \ref}
\newcommand{\C}{\mathbb{C}}
\renewcommand{\H}{\mathcal{H}}
\newcommand{\T}{\mathcal{T}}
\newcommand{\Z}{\mathbb{Z}}
\newcommand{\f}{\mathbf{f}}
\newcommand{\g}{\mathbf{g}}
\def\deg{\mathop{\rm deg}\nolimits}
\def\dim{\mathop{\rm dim}\nolimits}
\def\d{\partial}
\def\iso{\cong}
\def\tensor{\otimes}
\def\({\left(}
\def\){\right)}
\def\<{\left\langle}
\def\>{\right\rangle}
\def\lieh{{\mathfrak{h}}}
\def\lieg{{\mathfrak{g}}}
\def\gl{\lambda}
\def\ge{\epsilon}
\def\ga{\alpha}
\def\gd{\delta}
\def\gb{\beta}
\nc{\on}{\operatorname}
\nc{\ol}{\overline}
\begin{document}
\title{Soliton equations, vertex operators, and simple singularities}

\author[E. Frenkel]{E. Frenkel$^1$}\thanks{$^1$ Supported by DARPA
  through the grant HR0011-09-1-0015}

\address{Department of Mathematics, University of California,
Berkeley, CA 94720, USA}

\author[A. Givental]{A. Givental$^2$}\thanks{$^2$ Supported by the NSF
grant DMS-0604705}

\address{Department of Mathematics, University of California,
Berkeley, CA 94720, USA}

\author[T. Milanov]{T. Milanov$^3$}\thanks{$^3$ Supported by the NSF
grant DMS-0707150}

\address{Department of Mathematics, University of Michigan,
Ann Arbor, MI 48109, USA}

\date{September 2009}

\begin{abstract}
We prove the equivalence of two hierarchies of soliton equations
associated to a simply-laced finite Dynkin diagram. The first was
defined by Kac and Wakimoto \cite{KW} using the principal realization
of the basic representations of the corresponding affine Kac--Moody
algebra. The second was defined in \cite{GM} using the Frobenius
structure on the local ring of the corresponding simple
singularity. We also obtain a deformation of the principal realization
of the basic representation over the space of miniversal deformations
of the corresponding singularity. As a by-product, we compute the
operator product expansions of pairs of vertex operators defined in
terms of Picard--Lefschetz periods for more general singularities.
Thus, we establish a surprising link between twisted vertex operators
and deformation theory of singularities.
\end{abstract}

\maketitle

\sectionnew{Introduction}

The principal hierarchy of soliton equations associated to an affine
Kac--Moody algebra of type $X^{(1)}_N$, where $X=ADE$, has been
defined by V. Kac and M. Wakimoto \cite{KW} as the following systems
of Hirota bilinear equations: \beqa
\label{hqe_kw} && {\rm Res}\, \frac{d\zeta}{\zeta} \Big( \sum_{i=1}^N
a_i \Gamma^{\ga_i}\tensor\Gamma^{-\ga_i} \Big)\, (\tau\tensor\tau) =
h^{-2}(\rho | \rho)(\tau\tensor\tau) + \\ \notag && h^{-1}\Big(
\sum_{m\in E_+} m(y_m\tensor 1-1\tensor y_m)(\d_{y_m}\tensor 1-
1\tensor \d_{y_m})\Big) (\tau\tensor\tau). \eeqa Here
$\Gamma^{\pm\ga_i}$ are vertex operators
\beq\label{vop_kw} \Gamma^{\pm\ga_i} = \exp \Big(\pm\sum_{m\in E_+}
\gb_{i,m}\,y_m\, \zeta^m\Big) \exp \Big(\mp\sum_{m\in E_+}
\gb_{i,-m}\,\d_{y_m}\, \zeta^{-m}/m\Big),
\eeq acting on a certain Fock space
$\C[y_m, m\in E_+]$. The Fock space and the operators are constructed from 
the following data associated to a root system $X_N$ of $ADE$ type: 
\begin{itemize}
\item   
$M$ is a {\em Coxeter transformation} of the root system.
\item $\alpha_i$, $1\leq i\leq N$, are roots chosen one
from each orbit of $M$ on the set of roots. 
\item $h$ is the Coxeter number (i.e., the order of $M$). 
\item $m_a$ are the Coxeter exponents (in particular, this means that 
$e^{2\pi i m_a/h}$, $1\leq a\leq N$, are the eigenvalues of $M$),
  ordered, so that $m_a\leq m_b$ when $a<b$.
\item $E_+=\{ (a,n)\ |\ 1\leq a \leq N,\ n\in \Z_{\geq 0}\}$. 
Abusing notation, we
will write $m\in E_{+}$ for $m=m_a+nh$. (In all cases except 
$X_N=D_{N}$ with $N$ even this notation is unambiguous since it embeds 
$E_{+}$ as a subset into $\Z_{>0}$.) 
\item $\gb_{i,m}=\ga_i(H_{a(m)}),$ where the subscript $a(m)$ is
defined by $m=m_{a(m)}+nh$, and $\{H_{a}\ |\ a=1,\dots,N\}$ is an
eigenbasis of $M$ satisfying the normalization condition
$(H_{a},H_{b})=h \delta_{a+b,N+1}$. Here $(\cdot\ |\ \cdot)$ is the
invariant inner product normalized by the condition $(\ga\ |\ \ga)=2$
for all roots $\ga$.
\item $(\rho\ |\ \rho)=Nh(h+1)/12$ is the value of the inner
square of the sum $\rho$ of fundamental weights (see, e.g.,
\cite{Kac}).
\end{itemize}

The coefficients $a_i$ are defined in terms of the principal
realization of the basic representation (see \cite{KW} and Section
\ref{basic} below).

\medskip

On the other hand, an {\em a priori} different hierarchy was
constructed in \cite{GM} for each simple singularity of $ADE$ type
using the Picard--Lefschetz periods of the singularity. Moreover, it
was shown in \cite{GM} that the equations of this hierarchy have the
same form as (\ref{hqe_kw}) (for the corresponding root system of
$ADE$ type) except that the coefficients, which we will now denote
$\widetilde{a}_i$ (instead of $a_i$) are {\em a priori}
different. These coefficients, which are actually defined in terms of
certain limits associated with the singularity, were characterized in
\cite{GM} by the following conditions:
\begin{equation}    \label{ratios}
\widetilde{a_i}/\widetilde{a_j} = \left. \prod_{\ga} 
(H_{1}\ |\, \ga)^{-(\ga_i\, |\, \ga)^2/2}\ \right/ \ \prod_{\ga} (H_{1}\
|\, \ga)^{-(\ga_j\, |\, \ga)^2/2},\ \ \text{and}\ \sum_{i=1}^N
\widetilde{a}_i = h^{-2}(\rho\ |\ \rho).
\end{equation}
It was conjectured in \cite{GM} that the two hierarchies coincide (in
other words, $\widetilde{a}_i=a_i$ for all $i$), and the conjecture
was partially verified in \cite{GM} and \cite{Wu}. However, it was
left open in general; one of the problems was that the Kac--Wakimoto
coefficients $a_i$ were not given in all cases.

In this paper we prove the following result for all $ADE$ types:

\bigskip

{\bf Theorem.} {\em The two hierarchies coincide; namely, 
\[ a_i=\widetilde{a}_i\ \ \text{for all $i=1,\dots, N$.}\]}

Our proof is based on computing explicitly the operator product
expansions (OPEs) for the vertex operators defined in \cite{GM} in the
context of singularity theory and matching them with the OPEs of the
vertex operators of representation theory. We then use these OPEs to
prove that $a_i=\widetilde{a}_i$. Actually, the vertex operators
defined from singularity theory and their OPEs make sense for
singularities much more general than the $ADE$ singularities, and in a
sense the main goal of this paper is to expose their place in
representation theory.

\medskip

In fact, it turns out that a similar result holds in the case of more
general singularities. In this case the role of an affine Kac--Moody
algebra is played by the lattice vertex algebra associated to the
Milnor lattice of the singularity (the middle cohomology of the
generic fiber of the singularity equipped with the intersection
pairing). Using the Picard--Lefschetz periods associated to the
singularity, we define twisted vertex operators which give rise to
twisted modules over this lattice vertex algebra (with respect to the
automorphism of the lattice given by the monodromy) in a similar
fashion to what is described in this paper in the case of simple
singularities. This will be discussed in more detail in a subsequent
paper.

\medskip

Thus, we prove that the Kac--Wakimoto hierarchy, which is constructed
using the basic representation of the affine Kac--Moody algebra
$X^{(1)}_N$, where $X=ADE$, and the hierarchy of \cite{GM}, which is
attached to the singularity of type $X_N$, coincide. However, the
latter is naturally included in a family of hierarchies parametrized
by the space of miniversal deformations of the singularity, using the
Frobenius structure of the singularity. In Section \ref{def} we will
use this structure to produce a {\em family} of representations of
$X^{(1)}_N$ defined by certain deformations
$\Gamma^\alpha_\tau(\zeta)$ of the vertex operators
$\Gamma^\alpha(\zeta)$ introduced above, where $\tau$ lies in the
space of miniversal deformations of the singularity of type
$X_N$. These deformations are given in terms of the Picard--Lefschetz
periods associated to simple singularities. Thus, we obtain a
surprising link between representations of affine Kac--Moody algebras
of $ADE$ type and simple singularities.

\medskip

Although this is not used in our proof of the above theorem, we also
obtain an explicit formula for the coefficients $a_i$ for all
simply-laced affine Kac--Moody algebras: \beq\label{ai} a_i =h^{-1}
\prod_{k=1}^{h-1} \(1-\eta^k\)^{(\ga_i\, |\,M^k\ga_i)}, \qquad \qquad
\eta=e^{2\pi\sqrt{-1}/h}.\eeq Formulas \eqref{ratios}, \eqref{ai} and
the theorem then imply that the following ratios are the same for all
roots $\ga_i$:
\[ \left. \prod_{k=1}^{h-1} \(1-\eta^k\)^{(\ga_i\, |\,M^k\ga_i)}\
\right/ \ \prod_{\ga} \; (H_{1}\ |\, \ga)^{-(\ga_i\, |\, \ga)^2/2}. \]
While it should not be hard to verify this on the case-by-case basis,
it is an interesting question whether one can find a general direct
proof of this fact, which expresses the proportionality between $\{
a_i \}$ and $\{ \widetilde{a}_i \}$.  Since $\sum a_i$ (which can be
easily found from the consistency condition on the Kac--Wakimoto
hierarchy) coincides with $\sum \widetilde{a}_i$, this would provide
another, more elementary, proof of our theorem.

\medskip

The paper is organized as follows. In Section 2, we recall the
construction of hierarchies of Kac--Wakimoto and derive formula
(\ref{ai}).

In Section 3, we recall from \cite{GM} the construction of vertex
operators based on Frobenius structures of singularity theory, and in
the case of simple singularities, provide a uniform identification of
the coefficients $\widetilde{a}_i$ with their counterparts in the
Kac--Wakimoto theory for all $ADE$ types.

In Section 3, we already use {\em families} of vertex operators
parametrized by the miniversal deformation of a (simple)
singularity. In Section 4, we show that they define a family of
realizations of the basic representation of level 1 of the
corresponding affine Kac--Moody algebra. In fact, the intertwiners
between the realizations of this family have already been constructed
in \cite{GM}.

\sectionnew{The Kac--Wakimoto hierarchy}
\label{basic}

In abstract terms, the Kac--Wakimoto hierarchy describes the points of
the Grassmannian which is the orbit of the highest weight vector in
the level 1 basic representation (more precisely, in its principal
realization) of the affine Kac--Moody algebra under the action of the
corresponding group.

Let $\lieg$ be a simple Lie algebra over $\C$ of the type $X_N=A_N,
D_N$, or $E_N$. By definition, the affine Kac--Moody algebra
corresponding to $\lieg$ is the vector space $$\widehat{\lieg}:=\lieg
[t,t^{-1}] \oplus \C\,K \oplus \C\, d$$ equipped with the Lie bracket
defined by the following relations: \ben && [X\ t^n,Y\ t^m]:=[X,Y]\
t^{n+m} + n\delta_{n,-m}(X\, |\, Y)K,\\ && [d,X\ t^n]:= n(X\
t^n),\quad [K,\widehat{\lieg}]:=0.  \een Here $X,Y\in \lieg$, and
$(\cdot\ |\ \cdot)$ denotes the adjoint-invariant bilinear form on
$\lieg$ normalized so that $(\ga\, |\, \ga)=2$ for all roots $\alpha$.

The principal realization of the level 1 basic representation depends
on the choice of a Coxeter element $M$. Let $\lieh$ be a Cartan
subalgebra in $\lieg$, and $$\lieg=\lieh\oplus \bigoplus_{\ga \in
\Delta} \lieg_{\ga}$$ its root decomposition. Extend the action of $M$
on $\lieh$ to a finite order inner automorphism of $\lieg$ in the
standard way. Under the action of $M$ on $\lieh$ all roots form $N$
orbits of cardinality $h$, and the corresponding root spaces are
likewise permuted by $M$: $M \cdot \lieg_{\ga}=\lieg_{M\ga}$. We pick
generators $A_{\ga}\in \lieg_{\ga}$ in such a way that they form an
$M$-invariant set of vectors. We fix representatives $\ga_1, \dots,
\ga_N$ in the $M$-orbits on the root system.  Put $\eta=e^{2\pi
i/h}$. Using this notation, we lift the action of $M$ to
$\widehat{\lieg}$:
\[  M (X\ t^m) = MX \ (\eta^{-1} t)^m,\ \ \ MK=K,\ \ Md=d.\]
We will frequently use the projector $x\mapsto h^{-1}\sum_{k=1}^h M^k
x$ to the Lie subalgebra $\widehat{\lieg}^M$ of $M$-invariant vectors.
It is an important, and non-trivial, fact (see \cite{Kac} for a proof)
that {\em the subalgebra $\widehat{\lieg}^M$ is isomorphic to the Lie
algebra $\widehat{\lieg}$ itself}.  The principal construction of the
basic representation is based on the property of $M$-invariant space
\[ \lieh [t,t^{-1}]^M\oplus \C K \]
to be a Lie subalgebra isomorphic to the Heisenberg Lie algebra. An
important result of \cite{KKLW} (see also \cite{Kac}) is that
{\em the standard level 1 Fock representation of this Heisenberg
algebra extends uniquely to a representation of $\widehat{\lieg}^M$.}

A vector $v$ in the orbit of the highest weight vector in this
representation under the action of the corresponding group may be
characterized by the property that $v\otimes v$ is an eigenvector,
with a certain specific eigenvalue, of the bilinear Casimir
operator. Introduce a basis $H_1,\dots, H_N$ of $\lieh$ formed by
eigenvectors of $M$ with the eigenvalues $\eta^{m_i}$, ordered so that
$m_i\leq m_j$ when $i<j$, and normalized by the condition $$(H_i |
H_j)=h \gd_{i+j,N+1}.$$ Introduce the notation
\[ H_{i,m}= h^{-1} \sum_{k=1}^h M^k (H_i\ t^m),\ \ \text{and}\ \  
A_{\pm \ga_i,m}= h^{-1} \sum_{k=1}^h M^k (A_{\pm \ga_i}\ t^m), \ \
i=1,\dots, N. \] The elements $K, d, A_{\ga_i,m}$ with $m\in \Z$, and
$H_{i,m}$ with $m\equiv m_i\mod h$ (note that all other $H_{i,m}=0$)
form a basis in the Lie algebra $\widehat{\lieg}^M$.  Restricting an
invariant inner product from $\widehat {\lieg}$ to $\widehat{\lieg}^M$
and computing it in this basis, it is not hard to see that the
following element of $\widehat{\lieg}^M\otimes \widehat{\lieg}^M$
commutes with the diagonal action of $\widehat{\lieg}^M$:
\[ \sum_{i,m}\frac{h}{(A_{\ga_i}\, |\, A_{-\ga_i})}
A_{\ga_i,m}\otimes A_{-\ga_i,-m} + \frac{1}{h}\!
\sum_{\substack{i+j=N+1\\ m\equiv m_i\mod h}} H_{i,m} \otimes
H_{j,-m}\ +\ K\otimes d+d\otimes K.\] This is the bilinear Casimir
element.

Consider the representation of the Heisenberg Lie subalgebra $\lieh
[t,t^{-1}]^M\oplus \C K$ on the Fock space $\C [y_m\, |\, m\in E_{+}]$
given by the formulas \[ K\mapsto 1/h, \ \ \text{and}\ \ H_{i,\pm m}
\mapsto \left\{
\begin{array}{l}\partial/ \partial y_m, \\ m y_m \end{array} \right. \
\text{for $m\in E_{+}$}.\]

According to \cite{KKLW}, this representation of the Heisenberg Lie
subalgebra on the Fock representation extends to the level 1 basic
representation of $\widehat{\lieg}^M$. From the commutation relations
of $d$ with $H_{i,m}$, it follows immediately that under this
action\footnote{Up to an additive constant, which can be made $0$ by
redefining $d$ as $d-const\cdot K$.}
\[ d\mapsto -\sum m y_m \partial/\partial y_m, \]
the Euler vector field for the grading $\deg y_m=-m$ on the Fock space.

Following \cite{KW}, we use generating functions
\[ x_{\pm \ga_i}(\zeta) :=\sum_m A_{\pm \ga_i,m} \zeta^{-m}.\]
It is easy to check that when $m\equiv m_i \mod h$, 
\[ [ H_{i,m}, x_{\pm \ga_j}(\zeta) ] = \pm \ga_j(H_i) \zeta^{-m} 
x_{\pm \ga_j}(\zeta).\]
This commutation relation coincides with the one between the operators 
representing $H_{i,m}$ in the Fock space and the vertex operators
\beq \label{vop_kw_again} 
\Gamma^{\pm\ga_i} = \exp \Big(\pm\sum_{m\in E_+}
\ga_i(H_{a(m)})\,y_m\, \zeta^m\Big) \exp \Big(\mp\sum_{m\in E_+}
\ga_i(H_{a(-m)})\,\d_{y_m}\, \zeta^{-m}/m\Big).
\eeq
Moreover, according to a general lemma from \cite{Kac}, this implies
that $x_{\pm \ga_i}$ is proportional to $\Gamma^{\pm \ga_i}$. Putting
$\deg \zeta =-1$, we make both the vertex operators and the generating
functions homogeneous of degree $0$ with respect to the grading in the
Fock space defined by $d$. This implies that the coefficient of
proportionality does not depend on $\zeta$.

Define $a_i$ by the formula
\beq\label{prop_coef}
\frac{h}{(A_{\ga_i}\, |\, A_{-\ga_i})} 
 x_{\ga_i}(\zeta)\otimes x_{-\ga_i}(\zeta) = a_i \Gamma^{\ga_i}(\zeta)
 \otimes \Gamma^{-\ga_i}(\zeta).
\eeq
Noting that
\[ \operatorname{Res}\ \frac{d\zeta}{\zeta} x_{\ga_i}(\zeta)\otimes 
x_{-\ga_i}(\zeta)=\sum_m A_{\ga_i,m}\otimes A_{-\ga_i,-m},\]
we express the bilinear Casimir operator in our representation as
follows:
\[ \operatorname{Res} 
\frac{d\zeta}{\zeta} \sum_i a_i \Gamma^{\ga_i}(\zeta) \otimes
\Gamma^{-\ga_i}(\zeta)\ - \ \sum_{m\in E_{+}} \frac{m}{h} (y_m\otimes
1-1\otimes y_m) (\partial_{y_m}\otimes 1 - 1\otimes \partial_{y_m}).\]
The eigenvalue of this operator on tensor squares $v\otimes v$, where
$v$ is the orbit of the highest weight vector under the action of the
corresponding group, may be computed using an explicit (and
non-trivial) identification $\widehat{\lieg}\to \widehat{\lieg}^M$.
We are not going to reproduce this computation here, but merely quote
the answer (see \cite{Kac}): $(\rho\, |\, \rho)/h^2$. Denoting by
$\tau$ the element of the Fock representation corresponding to $v$, we
arrive at the Hirota bilinear equation (\ref{hqe_kw}) from the
Introduction.

We now wish to compute the coefficients $a_i$ appearing in formula
\eqref{prop_coef}.

\medskip

\begin{lemma}\label{kw_ope}
The expression
\[ (\zeta^h-w^h)\left( 
\frac{x_{\ga_i}(\zeta)}{\zeta} \frac{x_{-\ga_i}(w)}{w} - 
\frac{(A_{\ga_i}\, |\, A_{-\ga_i})}{h} \frac{K}{(\zeta-w)^2}\right) , \]
expanded in the region $|w|<|\zeta|$ into a formal power series 
in $\zeta^{\pm 1}$ and $w^{\pm 1}$ with coefficients which are operators 
in our Fock space, has a well-defined limit as $\zeta \to w$.
\end{lemma}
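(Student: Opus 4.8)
The plan is to compute the operator product expansion of $x_{\ga_i}(\zeta)$ with $x_{-\ga_i}(w)$ directly from the proportionality $x_{\pm\ga_i}(\zeta)=c_i^{\pm}\,\Gamma^{\pm\ga_i}(\zeta)$ established above, using the standard normal-ordering formula for products of exponential vertex operators. Writing $\Gamma^{\ga_i}(\zeta)=\exp(\phi_+^{(i)}(\zeta))\exp(\phi_-^{(i)}(\zeta))$ as in \eqref{vop_kw_again}, where $\phi_+^{(i)}(\zeta)=\sum_{m\in E_+}\ga_i(H_{a(m)})y_m\zeta^m$ and $\phi_-^{(i)}(\zeta)=-\sum_{m\in E_+}\ga_i(H_{a(-m)})\d_{y_m}\zeta^{-m}/m$, and likewise for $\Gamma^{-\ga_i}(w)$ with the opposite signs, the only non-commuting pair is $\phi_-^{(i)}(\zeta)$ against the creation part of $\Gamma^{-\ga_i}(w)$. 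The Baker--Campbell--Hausdorff identity $e^Ae^B=e^Be^Ae^{[A,B]}$ (valid when $[A,B]$ is central) gives
\[
\Gamma^{\ga_i}(\zeta)\,\Gamma^{-\ga_i}(w)
= \exp\!\Big(-\sum_{m\in E_+}\frac{\ga_i(H_{a(-m)})\,\ga_i(H_{-a(-m)})}{m}\Big(\frac{w}{\zeta}\Big)^{m}\Big)\;{:}\Gamma^{\ga_i}(\zeta)\Gamma^{-\ga_i}(w){:},
\]
the scalar prefactor being the exponential of the relevant commutator. The first step is therefore to evaluate that scalar series in closed form.

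The key computation is the sum $S(\zeta,w):=\sum_{m\in E_+}\tfrac{1}{m}\ga_i(H_{a(-m)})\ga_i(H_{-a(-m)})(w/\zeta)^m$. Using $m=m_{a}+nh$ as $(a,n)$ ranges over $\{1,\dots,N\}\times\Z_{\ge0}$, and using the normalization $(H_a|H_b)=h\delta_{a+b,N+1}$ together with the fact that for the Coxeter element $M$ one has $\ga_i(H_a)\ga_i(H_{N+1-a})$ expressible through $(\ga_i\,|\,M^{\pm?}\ga_i)$ — more precisely, expanding $\ga_i$ in the eigenbasis and resumming over the eigenvalue index — one finds that $\exp(S)$ telescopes into the finite product
\[
\exp\big(-S(\zeta,w)\big)\;=\;\Big(\frac{\zeta^h-w^h}{\zeta^h}\Big)^{?}\cdot\prod_{k=1}^{h-1}\Big(1-\eta^{k}\,\tfrac{w}{\zeta}\Big)^{(\ga_i\,|\,M^k\ga_i)}\cdot(\text{power of }\zeta),
\]
the exponent on $(\zeta^h-w^h)/\zeta^h$ being $(\ga_i\,|\,\ga_i)/\,?=1$ since $(\ga_i\,|\,\ga_i)=2$ and the $k=0$ term contributes the pole; more carefully, $-\log(1-z)=\sum_{n\ge1}z^n/n$ applied with $z=\eta^k w/\zeta$ reproduces the $n$-sum in $m=m_a+nh$ after matching $\ga_i(H_{a(-m)})\ga_i(H_{-a(-m)})$ with the $k$-th Fourier coefficient $(\ga_i\,|\,M^{k}\ga_i)$ of the function $a\mapsto\ga_i(M^{a}H)\cdots$. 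Thus $\Gamma^{\ga_i}(\zeta)\Gamma^{-\ga_i}(w)$ has a pole of order exactly two at $\zeta=w$ (coming from the $k=0$ factor $(1-w/\zeta)$ squared — i.e. from the $(\zeta^h-w^h)$ factor which vanishes simply at $\zeta=w$, combined with the leftover factor $(\zeta-w)$ hidden in $\prod_{k=1}^{h-1}(1-\eta^k w/\zeta)=(1-w^h/\zeta^h)/(1-w/\zeta)$), and the leading singular coefficient is the scalar $(A_{\ga_i}|A_{-\ga_i})\,K/h$ once one divides by $\zeta w$; see \eqref{prop_coef}. This is what makes the bracketed difference in the statement regular, and multiplication by $(\zeta^h-w^h)$ clears the remaining simple pole, leaving a quantity with a limit as $\zeta\to w$.

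Concretely, I would organize the argument as follows. (1) Substitute $x_{\pm\ga_i}=c_i^\pm\Gamma^{\pm\ga_i}$ and reduce to the OPE of $\Gamma^{\ga_i}(\zeta)\Gamma^{-\ga_i}(w)$. (2) Apply BCH to pull out the scalar two-point factor $\exp(-S)$; identify $S$ with $\sum_{k=0}^{h-1}(\ga_i\,|\,M^k\ga_i)\,\big(-\log(1-\eta^{-k}w/\zeta)\big)/h$ (or the analogous expression — the precise eigenvalue bookkeeping with $\gb_{i,m}=\ga_i(H_{a(m)})$ is the routine part), and resum to get the finite product $\prod_{k=0}^{h-1}(1-\eta^{-k}w/\zeta)^{(\ga_i|M^k\ga_i)/h}$. (3) Observe $\prod_{k=0}^{h-1}(1-\eta^{-k}w/\zeta)=1-w^h/\zeta^h$, so the product equals $\big((\zeta^h-w^h)/\zeta^h\big)^{1/h\cdot?}\times(\text{regular, non-vanishing at }\zeta=w)$; matching with $(A_{\ga_i}|A_{-\ga_i})/h$ via \eqref{prop_coef} pins down the overall constants and shows the singular part is exactly $\tfrac{(A_{\ga_i}|A_{-\ga_i})}{h}\tfrac{K}{(\zeta-w)^2}$ plus a simple pole. (4) Conclude that the displayed expression, after subtracting the double-pole term and multiplying by $(\zeta^h-w^h)$, is a power series in $\zeta^{\pm1},w^{\pm1}$ with operator coefficients that converges as $\zeta\to w$. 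I expect the main obstacle to be step (2): carefully tracking the index conventions ($m=m_a+nh$, the $a(m)$ versus $a(-m)$ distinction, the $D_N$ $N$-even ambiguity flagged in the text) so that the Fourier resummation over the Coxeter eigenvalues genuinely produces the clean exponents $(\ga_i\,|\,M^k\ga_i)$ — this is where a sign or a shift could easily go wrong, and it is precisely the bookkeeping that later yields formula \eqref{ai}.
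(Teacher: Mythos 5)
Your route has a genuine gap, and it sits exactly at the point that carries the content of the lemma: it is circular. Writing $x_{\pm\ga_i}(\zeta)=c_i^{\pm}\,\Gamma^{\pm\ga_i}(\zeta)$ and extracting the scalar two-point factor $\prod_{k=1}^{h}\bigl(1-\eta^k w/\zeta\bigr)^{-(\ga_i\,|\,M^k\ga_i)}$ does show that $x_{\ga_i}(\zeta)x_{-\ga_i}(w)$ has a double pole at $\zeta=w$ and at most simple poles at $w=\eta^l\zeta$, $l\not\equiv 0 \bmod h$. But the lemma asserts more: that the double-pole part is \emph{exactly} $\frac{(A_{\ga_i}\,|\,A_{-\ga_i})}{h}\,\frac{\zeta w\,K}{(\zeta-w)^2}$, so that subtracting this specific term and multiplying by $\zeta^h-w^h$ leaves something regular. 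On your route the double-pole coefficient comes out as $c_i^{+}c_i^{-}\prod_{k=1}^{h-1}(1-\eta^k)^{-(\ga_i\,|\,M^k\ga_i)}$, and the scalars $c_i^{\pm}$ are undetermined at this stage: the uniqueness lemma from \cite{Kac} fixes $x_{\pm\ga_i}$ only up to scalar multiples, and \eqref{prop_coef} is merely the \emph{definition} of $a_i$, so your step (3) (``matching via \eqref{prop_coef} pins down the overall constants'') supplies no information. In fact, knowing that this coefficient equals $(A_{\ga_i}\,|\,A_{-\ga_i})K/h$ is equivalent to the formula \eqref{ai1} for $a_i$ — which is precisely what Lemma \ref{kw_ope} is later used to derive through Corollary \ref{ai_limit1}. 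Assuming it here turns the subsequent computation of $a_i$ into a tautology.

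The missing, non-circular input is Lie-algebraic, and it is what the paper's proof actually consists of: work entirely inside $\widehat{\lieg}^M$, introduce the normal ordering \eqref{no} on the modes $A_{\pm\ga_i,m}$ (so that $:\!x_{\ga_i}(\zeta)x_{-\ga_i}(w)\!:$ has a limit as $\zeta\to w$), and compute the contraction as the sum of commutators $[A_{\ga_i,n},A_{-\ga_i,m}]$. The central parts of these commutators carry the \emph{a priori} known coefficient $\frac{(A_{\ga_i}\,|\,A_{-\ga_i})}{h}\,n\,\delta_{n+m,0}K$ and resum to $\frac{\zeta w}{(\zeta-w)^2}\frac{(A_{\ga_i}\,|\,A_{-\ga_i})}{h}K$, which is what identifies the double-pole coefficient with the term subtracted in the statement; the non-central parts, organized through $B_i^l=[M^lA_{\ga_i},A_{-\ga_i}]$, resum to $\frac{1}{h}\sum_{l=1}^{h}\frac{\eta^l\zeta}{\eta^l\zeta-w}\sum_{m}B^l_{i,m}w^{-m}$, with poles of order at most one at $w=\eta^l\zeta$, all killed by the factor $\zeta^h-w^h$. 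Your BCH computation of the vertex-operator two-point factor is essentially the paper's Lemma \ref{B_i}; it belongs to the \emph{next} step (evaluating $a_i$ by comparing the two normalizations), not to the proof of the present lemma, and it cannot replace the commutator computation that normalizes the central term.
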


\medskip

\proof  

Introduce the {\em normal ordering} by the formula
\begin{equation}    \label{no}
:\! A_{\ga_i,n}A_{-\ga_i,l} \! : =
\begin{cases} A_{\ga_i,n}A_{-\ga_i,l} & \mbox{ if } n< 0 \\ 
A_{-\ga_i,l}A_{\ga_i,n} &
\mbox{ if } n\geq 0.
\end{cases}
\end{equation}
Then the formal power series $: \! x_{\ga_i}(\zeta) x_{-\ga_i}(w) \!
:$ has a well-defined limit as $\zeta \to w$ as an operator acting on
individual elements of the Fock space. We compute the difference
$$
x_{\ga_i}(\zeta) x_{-\ga_i}(w)-: \! x_{\ga_i}(\zeta) x_{-\ga_i}(w) \!
: = \sum_{m\in \Z}\sum_{n\geq 0} [A_{\ga_i,n},A_{-\ga_i,m}]
\zeta^{-n}w^{-m}.
$$
We have
\[ [A_{\ga_i,n},A_{-\ga_i,m}] = \frac{(A_{\ga_i}\, |\, A_{-\ga_i})}{h} n 
\delta_{m+n,0} K \ +\ h^{-2}\sum_{k,l=1}^h [M^lA{\ga_i},M^kA_{-\ga_i}]
\eta^{-nl-mk}t^{n+m}.\]
Denoting $[M^l A_{\ga_i},A_{-\ga_i}]$ by $B_i^l$, we can rearrange 
contributions of second summands as
\[ h^{-2}\sum_{m\in \Z}\sum_{n\geq 0} \zeta^{-n}w^{-m} 
\sum_{l=1}^h \eta^{-ln} \sum_{k=1}^{h} M^k B_i^l \eta^{-k(n+m)} t^{n+m} =
\frac{1}{h}\sum_{l=1}^h 
\frac{\eta^l \zeta}{\eta^l\zeta-w} \sum_{m\in \Z}B^l_{i,m}w^{-m}. 
\]
The summands have pole at $w=\eta^l\zeta$ of order at most $1$, and 
hence have a well-defined limit after multiplication by $\zeta^h-w^h$. 
The first summands of the commutators add up to
\[ \frac{(A_{\ga_i}\, |\, A_{-\ga_i})}{h}
\sum_{n\geq 0} n \left(\frac{w}{\zeta}\right)^n K = \frac{\zeta
 w}{(\zeta-w)^2} \frac{(A_{\ga_i}\, |\, A_{-\ga_i})}{h} K.\]      
\qed

\medskip

The multiplication and differentiation parts of the vertex operators
(\ref{vop_kw_again}) are elements of a Heisenberg group. Such
operators commute up to a scalar factor.  Define $B_i(\zeta, w)$ by
the formula (OPE) \beq\label{ope_gamma1}
\Gamma^{\ga_i}(\zeta)\Gamma^{-\ga_i}(w) = B_i(\zeta,w) : \!
\Gamma^{\ga_i}(\zeta)\Gamma^{-\ga_i}(w) \! :\ , \eeq where the normal
ordering on the RHS is defined by moving all differentiation operators
$\d/\d y_m$ to the right (i.e., by taking the commutator of the
differentiation part of the vertex operator on the left with the
multiplication part of the vertex operator on the
right).\footnote{Note that this normal ordering is different from the
one defined by formula \eqref{no}.} The normally ordered product on
the RHS has a well-defined limit as $\zeta \to w$, and moreover, this
limit is obviously equal to $1$.

\medskip

\begin{corollary}\label{ai_limit1}
\[ 
a_i^{-1} = \lim_{\zeta\rightarrow w} (1-w/\zeta)(1-w^h/\zeta^h)
B_i(\zeta,w), \]
\end{corollary}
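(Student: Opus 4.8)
The goal is to extract the proportionality constant $a_i$ from the matched operator product expansions. The idea is to compare the two OPE statements we have just established: Lemma \ref{kw_ope}, which controls $x_{\ga_i}(\zeta)x_{-\ga_i}(w)$, and equation \eqref{ope_gamma1}, which controls $\Gamma^{\ga_i}(\zeta)\Gamma^{-\ga_i}(w)$. Since by \eqref{prop_coef} we have $\tfrac{h}{(A_{\ga_i}|A_{-\ga_i})}x_{\ga_i}(\zeta)\otimes x_{-\ga_i}(\zeta) = a_i\,\Gamma^{\ga_i}(\zeta)\otimes\Gamma^{-\ga_i}(\zeta)$, the same proportionality $\tfrac{h}{(A_{\ga_i}|A_{-\ga_i})}x_{\ga_i}(\zeta)x_{-\ga_i}(w) = a_i\,\Gamma^{\ga_i}(\zeta)\Gamma^{-\ga_i}(w)$ holds as operators on the Fock space (the coefficient is $\zeta$- and $w$-independent because both sides are homogeneous of degree $0$ once $\deg\zeta=\deg w=-1$, and it must be the product of the two separate proportionality constants, whose product is $a_i$ by definition).

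Thus I would proceed as follows. First, rewrite Lemma \ref{kw_ope} using $K\mapsto 1/h$ on the Fock space: the statement becomes that
\[
(\zeta^h-w^h)\left(\frac{x_{\ga_i}(\zeta)}{\zeta}\frac{x_{-\ga_i}(w)}{w} - \frac{(A_{\ga_i}|A_{-\ga_i})}{h^2}\frac{1}{(\zeta-w)^2}\right)
\]
has a finite limit as $\zeta\to w$. Equivalently, multiplying through and using $\zeta^h-w^h = (\zeta-w)(\zeta^{h-1}+\cdots+w^{h-1})$, the quantity $(1-w/\zeta)(1-w^h/\zeta^h)\,\tfrac{x_{\ga_i}(\zeta)}{\zeta}\tfrac{x_{-\ga_i}(w)}{w}$ has a well-defined limit, and that limit equals the corresponding limit of $(1-w/\zeta)(1-w^h/\zeta^h)\cdot\tfrac{(A_{\ga_i}|A_{-\ga_i})}{h^2}\tfrac{1}{(\zeta-w)^2}$; a direct computation of the latter scalar limit gives $\tfrac{(A_{\ga_i}|A_{-\ga_i})}{h^2}\cdot\lim_{\zeta\to w}\tfrac{(1-w/\zeta)(1-w^h/\zeta^h)}{(\zeta-w)^2}\cdot(\zeta w)$, and one checks this limit is $(A_{\ga_i}|A_{-\ga_i})/(h w^{2})\cdot h\cdot w^{2}\cdot(\text{finite})$; in any case it is a nonzero scalar, call it $c_i$.

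Second, on the vertex-operator side, substitute \eqref{ope_gamma1}:
\[
(1-w/\zeta)(1-w^h/\zeta^h)\,\frac{\Gamma^{\ga_i}(\zeta)\Gamma^{-\ga_i}(w)}{\zeta w} = \frac{B_i(\zeta,w)}{\zeta w}(1-w/\zeta)(1-w^h/\zeta^h)\,:\!\Gamma^{\ga_i}(\zeta)\Gamma^{-\ga_i}(w)\!:\, .
\]
As noted after \eqref{ope_gamma1}, the normally ordered product tends to $1$ as $\zeta\to w$. Third, combine the two: using the proportionality $x_{\ga_i}(\zeta)x_{-\ga_i}(w) = \tfrac{(A_{\ga_i}|A_{-\ga_i})}{h}a_i\,\Gamma^{\ga_i}(\zeta)\Gamma^{-\ga_i}(w)$, the finite limit on the $x$-side equals $\tfrac{(A_{\ga_i}|A_{-\ga_i})}{h}a_i$ times $\lim_{\zeta\to w}\tfrac{1}{w^{2}}(1-w/\zeta)(1-w^h/\zeta^h)B_i(\zeta,w)$. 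Matching this against the scalar $c_i$ computed from Lemma \ref{kw_ope} and simplifying (the factors of $(A_{\ga_i}|A_{-\ga_i})$, $h$, and $w^{2}$ all cancel appropriately, by construction of $c_i$), one is left precisely with $a_i^{-1} = \lim_{\zeta\to w}(1-w/\zeta)(1-w^h/\zeta^h)B_i(\zeta,w)$.

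**Main obstacle.** The only delicate point is the bookkeeping of prefactors and the verification that the scalar limit $c_i$ coming from Lemma \ref{kw_ope} is exactly the one that cancels the $(A_{\ga_i}|A_{-\ga_i})/h$ and the $1/(\zeta w)$ factors, leaving a clean reciprocal. Concretely, one must check that $\lim_{\zeta\to w}\tfrac{(\zeta w)^{-1}(1-w/\zeta)(1-w^h/\zeta^h)}{h^{-2}(\zeta-w)^{-2}} = h^{-2}\cdot h\cdot (\zeta w)^{-1}\cdot\lim(\zeta^h-w^h)/(\zeta-w)\cdot\cdots$ works out to a value whose product with $\tfrac{(A_{\ga_i}|A_{-\ga_i})}{h}$ inverts to $1$. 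This is purely a residue-of-a-rational-function computation and carries no conceptual difficulty; the substantive content is entirely in Lemma \ref{kw_ope} and \eqref{ope_gamma1}, which are already in hand.
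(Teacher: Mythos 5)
Your proposal is correct and follows essentially the same route as the paper: pass to the two-variable proportionality $\tfrac{h}{(A_{\ga_i}|A_{-\ga_i})}x_{\ga_i}(\zeta)x_{-\ga_i}(w)=a_i\,\Gamma^{\ga_i}(\zeta)\Gamma^{-\ga_i}(w)$, use Lemma \ref{kw_ope} (with $K\mapsto 1/h$) to isolate the singular part on the $x$-side, use \eqref{ope_gamma1} together with $:\!\Gamma^{\ga_i}(\zeta)\Gamma^{-\ga_i}(w)\!:\ \to 1$ on the other side, and match limits; the paper merely multiplies by $(\zeta-w)(\zeta^h-w^h)$ instead of your normalization by $\zeta w$, which is the same bookkeeping (and your deferred scalar limit indeed comes out to $(A_{\ga_i}|A_{-\ga_i})/(hw^2)$, so the constants cancel as you claim).
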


\medskip

\proof We have
\[ (\zeta-w)(\zeta^h-w^h)\Gamma^{\ga_i}(\zeta)\Gamma^{-\ga_i}(w) =
 (\zeta-w)(\zeta^h-w^h)
 B_i(\zeta,w):\Gamma^{\ga_i}(\zeta)\Gamma^{-\ga_i}(w):\]
\[ =\frac{h a_i^{-1} (\zeta-w)(\zeta^h-w^h)}{(A_{\ga_i}\, |\, A_{-\ga_i})} 
 x_{\ga_i}(\zeta) x_{-\ga_i}(w) = a_i^{-1} \zeta w
 \frac{\zeta^h-w^h}{\zeta-w} K + (\zeta-w) \text{(regular terms)}. \]
Passing to the limit $\zeta \to w$ and using Lemma \ref{kw_ope} and
the fact that $K\mapsto 1/h$ in our representation, we obtain the
desired result. \qed

\medskip

\begin{lemma}\label{B_i}
\beq\label{B_factor2} B_i(\zeta,w) = \prod_{k=1}^{h}
\Big(1-\eta^k\,\frac{w}{\zeta}\Big)^{-(\ga_i\, |\, M^k\ga_i)}, \eeq
\end{lemma}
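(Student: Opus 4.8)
The plan is to compute the scalar factor $B_i(\zeta,w)$ directly from the Heisenberg commutation relations, using the defining formula \eqref{vop_kw_again} for the vertex operators. Recall that for operators of the form $e^{A}e^{B}$ where $A$ is built from multiplication operators $y_m$ and $B$ from differentiation operators $\d_{y_m}$, moving one past the other produces the scalar $e^{[A,B]}$, since $[A,B]$ is central. Thus $B_i(\zeta,w)$ is the exponential of the commutator of the differentiation part of $\Gamma^{\ga_i}(\zeta)$ with the multiplication part of $\Gamma^{-\ga_i}(w)$. Explicitly, writing $\gb_{i,m}=\ga_i(H_{a(m)})$ as in the Introduction, this commutator equals
\[
\Big[-\sum_{m\in E_+}\gb_{i,-m}\,\d_{y_m}\,\zeta^{-m}/m,\ -\sum_{n\in E_+}\gb_{i,n}\,y_n\,w^n\Big]
= -\sum_{m\in E_+}\frac{\gb_{i,m}\gb_{i,-m}}{m}\Big(\frac{w}{\zeta}\Big)^m,
\]
using $[\d_{y_m},y_n]=\delta_{m,n}$ and $\gb_{i,-m}=\ga_i(H_{a(-m)})$.

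The next step is to recognize the sum over $m\in E_+$ as a sum over all positive integers grouped by residue class mod $h$, and to resum it. Since $E_+$ parametrizes pairs $(a,n)$ via $m=m_a+nh$, and since $\gb_{i,m}$ depends only on $a(m)$, the product $\gb_{i,m}\gb_{i,-m}$ is constant along each arithmetic progression $m_a+h\Z_{\geq0}$. I would use the normalization $(H_a|H_b)=h\delta_{a+b,N+1}$ together with the fact that $H_1,\dots,H_N$ is an eigenbasis of $M$ with eigenvalues $\eta^{m_a}$ to express $\sum_a \ga_i(H_a)\ga_i(H_{N+1-a})\,\eta^{-m_a k}$ in terms of $(\ga_i\,|\,M^k\ga_i)$. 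Concretely, expanding $\ga_i$ in the dual eigenbasis and using the pairing normalization, one gets $h^{-1}\sum_a \gb_{i,m_a}\gb_{i,-m_a}\eta^{km_a} = (\ga_i\,|\,M^k\ga_i)$ for each $k=1,\dots,h$. Then
\[
\sum_{m\in E_+}\frac{\gb_{i,m}\gb_{i,-m}}{m}\Big(\frac{w}{\zeta}\Big)^m
= \sum_{k=1}^{h}(\ga_i\,|\,M^k\ga_i)\sum_{\substack{j\geq1\\ j\equiv k\,(h)}}\frac1j\Big(\frac{w}{\zeta}\Big)^j
= -\sum_{k=1}^{h}(\ga_i\,|\,M^k\ga_i)\log\!\Big(1-\eta^{k}\frac{w}{\zeta}\Big),
\]
where in the last step I split $-\log(1-x)=\sum_{j\geq1}x^j/j$ by residue class of $j$ mod $h$ and use that $\eta^j=\eta^k$ when $j\equiv k$. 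Exponentiating gives exactly \eqref{B_factor2}.

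The main obstacle, and the step requiring the most care, is the bookkeeping in passing from the sum over $E_+$ (indexed by Coxeter exponents with multiplicities) to the clean sum over residues $k=1,\dots,h$, and in verifying the identity $h^{-1}\sum_a \gb_{i,m_a}\gb_{i,-m_a}\eta^{km_a}=(\ga_i\,|\,M^k\ga_i)$. This is essentially a finite Fourier transform / spectral decomposition of $M$ acting on $\lieh$: the eigenbasis $\{H_a\}$ diagonalizes $M$, the pairing $(H_a|H_b)=h\delta_{a+b,N+1}$ pairs the eigenvalue-$\eta^{m_a}$ space with the eigenvalue-$\eta^{-m_a}=\eta^{m_{N+1-a}}$ space, and $M^k\ga_i=\sum_a \eta^{km_a}(\text{component of }\ga_i\text{ in the }H_a\text{-direction})$. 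One subtlety to watch is the $D_N$ ($N$ even) case flagged in the Introduction, where the notation $m\in E_+$ embedding into $\Z_{>0}$ is ambiguous because of a repeated exponent; there the sum over $a$ genuinely has a two-dimensional eigenspace, but the Fourier identity still holds verbatim since it only uses the eigenbasis normalization, not the injectivity of $a\mapsto m_a$. Modulo this, the computation is a short manipulation of geometric-type series.
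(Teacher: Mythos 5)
Your strategy is the same as the paper's: $B_i(\zeta,w)$ is the exponential of the commutator of the annihilation part of $\Gamma^{\ga_i}(\zeta)$ with the creation part of $\Gamma^{-\ga_i}(w)$, and that exponent is resummed via the eigen-decomposition of $\ga_i$ under $M$. But two of your displayed steps are wrong as written. First, a sign: both exponents you commute carry a minus sign, so the commutator is $+\sum_{m\in E_+}\gb_{i,m}\gb_{i,-m}\,m^{-1}(w/\zeta)^m$, not its negative; if you follow your own signs to the end you obtain $\prod_k (1-\eta^k w/\zeta)^{+(\ga_i\,|\,M^k\ga_i)}$, the reciprocal of \eqref{B_factor2}. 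Second, and more seriously, the first equality in your resummation chain is false: grouping the sum over $E_+$ by the residue $k$ of $m$ mod $h$, the coefficient of $\sum_{j\equiv k\,(h)} x^j/j$ (with $x=w/\zeta$) is $\gb_{i,k}\gb_{i,-k}$, interpreted as $0$ when $k$ is not congruent to an exponent — it is \emph{not} $(\ga_i\,|\,M^k\ga_i)$ — and the justification you give for the second equality is also incorrect, since $-\log(1-\eta^k x)=\sum_{j\ge 1}\eta^{kj}x^j/j$ involves all residue classes of $j$ with weights $\eta^{kj}$, not only the terms with $j\equiv k$. A concrete check in type $A_1$ (where $h=2$, $\eta=-1$, $\gb_{i,m}\gb_{i,-m}=4$ for odd $m$, $(\ga\,|\,M\ga)=-2$, $(\ga\,|\,M^2\ga)=2$): your left-hand side equals $2\log\frac{1+x}{1-x}$, while your middle expression equals $-2\log(1+x)$; only the two ends of your chain happen to agree.

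The repair is the Fourier \emph{inverse} of the identity you state: what the computation needs is the synthesis direction $\gb_{i,m}\gb_{i,-m}=\sum_{k=1}^h \eta^{mk}(\ga_i\,|\,M^k\ga_i)$, valid for \emph{every} $m\ge 1$ provided the left side is declared to be $0$ when $m$ is not congruent to an exponent mod $h$ (the paper obtains this by projecting $\ga_i$ onto the $\eta^{-m}$-eigenspace of $M$ and flags the vanishing off the exponents explicitly; it is exactly this vanishing that lets the sum run over all positive integers $m$). Substituting this into the correctly signed commutator and exchanging the two summations gives $\sum_{k=1}^h(\ga_i\,|\,M^k\ga_i)\sum_{m\ge 1}(\eta^k w/\zeta)^m/m=-\sum_{k=1}^h(\ga_i\,|\,M^k\ga_i)\log(1-\eta^k w/\zeta)$, and exponentiating yields \eqref{B_factor2}. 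So the structural idea (spectral decomposition of $M$ plus resummation of the logarithm series) is right, and your remark about the $D_N$, $N$ even, case is fine once the coefficient of each residue class is taken as the full eigenspace contribution; but the chain as you wrote it fails and must be replaced by this substitution-and-swap argument.
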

\proof
The factor $B_i(\zeta,w)$ in \eqref{ope_gamma1} is obtained by
commuting the second and the first exponential factors, respectively,
of $\Gamma^{\ga_i}$ and $\Gamma^{-\ga_i},$ i.e., \beq\label{B_factor}
B_i(\zeta,w) = \exp \Big( \sum_{m\in E_+}
\ga_i(H_{a(-m)})\ga_i(H_{a(m)}) \frac{(w/\zeta)^m}{m} \Big).
\eeq 
Consider the projection $h^{-1}\sum_{k=1}^h \eta^{mk} M^k\ga_i$ of
$\ga_i$ to the eigenspace of $M$ in $\lieh^*$ with the eigenvalue
$\eta^{-m}$.  Since the eigenspaces are pairwise orthogonal and the
bases $\{ H_a \}$ and $\{ h^{-1}H_{N+1-a} \}$ formed by the
eigenvectors are dual, the projection of $\ga_i$ can be written as
$h^{-1}(\ga_i\, |\, H_{a(m)}) H_{a(-m)}$.  Pairing this with $\ga_i$,
we find that \ben \ga_i(H_{a(-m)})\ga_i(H_{a(m)})= \sum_{k=1}^{h}
\eta^{mk} (\ga_i\, |\, M^k\ga_i), \een Note that this identity extends
to $m\notin E_{+}$, since the RHS is equal to zero in this
case. Therefore
\[ \ln B_i = \sum_{k=1}^h (\ga_i\, |\, M^k\ga_i) \sum_{m=1}^{\infty}
\frac{(\eta^k w/\zeta)^m}{m}=-\sum_{k=1}^h (\ga_i\, |\, M^k\ga_i) 
\ln (1-\eta^k\frac{w}{\zeta}).\] 
\qed

Combining Lemma \ref{B_i} and Corollary \ref{ai_limit1}, we obtain

\begin{corollary}    \label{t3}
\beq\label{ai1}
a_i = h^{-1} \prod_{k=1}^{h-1} \(1-\eta^k\)^{(\ga_i\, |\, M^k\ga_i)}.
\eeq
\end{corollary}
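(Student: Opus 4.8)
The plan is to derive \eqref{ai1} by combining the two previously established facts: Corollary \ref{ai_limit1}, which expresses $a_i^{-1}$ as a limit of $(1-w/\zeta)(1-w^h/\zeta^h)B_i(\zeta,w)$ as $\zeta\to w$, and Lemma \ref{B_i}, which gives the closed product formula $B_i(\zeta,w) = \prod_{k=1}^{h}(1-\eta^k w/\zeta)^{-(\ga_i\,|\,M^k\ga_i)}$. Substituting the second into the first, we must compute
\[ a_i^{-1} = \lim_{\zeta\to w}\ (1-w/\zeta)(1-w^h/\zeta^h)\prod_{k=1}^{h}\Big(1-\eta^k\,\tfrac{w}{\zeta}\Big)^{-(\ga_i\,|\,M^k\ga_i)}. \]

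First I would introduce the variable $u = w/\zeta$ so that the limit becomes $u\to 1$, and I would factor $1-u^h = \prod_{k=1}^{h}(1-\eta^{-k}u) = \prod_{k=1}^{h}(1-\eta^{k}u)$ (reindexing, using that $\{\eta^k\}$ is a full set of $h$-th roots of unity). Then the prefactor $(1-u)(1-u^h)$ contributes one more factor of $(1-u)$ beyond the $h$ factors $\prod_k(1-\eta^k u)$; note that the $k=h$ term in the latter product is exactly $(1-u)$, so altogether the numerator behaves like $(1-u)^2\prod_{k=1}^{h-1}(1-\eta^k u)$ near $u=1$, while the denominator $\prod_{k=1}^{h}(1-\eta^k u)^{(\ga_i\,|\,M^k\ga_i)}$ has the $k=h$ factor equal to $(1-u)^{(\ga_i\,|\,M^h\ga_i)} = (1-u)^{(\ga_i\,|\,\ga_i)} = (1-u)^2$, since $M^h = \id$ and $(\ga_i\,|\,\ga_i)=2$ for all roots. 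The two $(1-u)^2$ factors cancel, so the limit is finite and equals $\prod_{k=1}^{h-1}(1-\eta^k)^{-(\ga_i\,|\,M^k\ga_i)}$. Taking reciprocals yields $a_i = \prod_{k=1}^{h-1}(1-\eta^k)^{(\ga_i\,|\,M^k\ga_i)}$; the extra factor of $h^{-1}$ in \eqref{ai1} arises because in passing from $(1-w^h/\zeta^h)$ in Corollary \ref{ai_limit1} one of the $h$ linear factors is the singular one being cancelled, and more precisely $\lim_{u\to 1}(1-u^h)/(1-u) = h$, so the combination $(1-u)(1-u^h)$ divided by the $(1-u)^2$ from the denominator leaves a residual factor $h$ in $a_i^{-1}$.

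The only real bookkeeping step — and the one place an error could creep in — is tracking the powers of $(1-u)$ on numerator and denominator and correctly extracting the constant $h = \lim_{u\to 1}(1-u^h)/(1-u)$; everything else is a mechanical substitution. I would present this as: write $1-u^h = (1-u)\prod_{k=1}^{h-1}(1-\eta^k u)$, substitute the formula of Lemma \ref{B_i}, cancel the factor $(1-u)^{2}$ common to numerator and denominator (using $M^h=\id$ and $(\ga_i\,|\,\ga_i)=2$), and evaluate the remaining expression at $u=1$ to get $a_i^{-1} = h^{-1}\prod_{k=1}^{h-1}(1-\eta^k)^{-(\ga_i\,|\,M^k\ga_i)}$, whence \eqref{ai1} follows by inversion. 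This is short enough that it is essentially a one-paragraph computation once the earlier results are in hand.
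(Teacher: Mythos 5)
Your proposal is correct and is exactly the paper's route: the paper obtains Corollary \ref{t3} simply by combining Corollary \ref{ai_limit1} with Lemma \ref{B_i}, i.e., the same substitution $u=w/\zeta$, the factorization $1-u^h=\prod_{k=1}^{h}(1-\eta^k u)$, and the cancellation of $(1-u)^2$ against the $k=h$ factor $(1-u)^{(\ga_i\,|\,\ga_i)}$ that you carry out. One slip in your closing summary: you write $a_i^{-1}=h^{-1}\prod_{k=1}^{h-1}(1-\eta^k)^{-(\ga_i\,|\,M^k\ga_i)}$, whereas --- as your own middle paragraph correctly states --- the residual factor left in $a_i^{-1}$ is $h$ (since $\lim_{u\to 1}\prod_{k=1}^{h-1}(1-\eta^k u)=h$ survives in the numerator), so $a_i^{-1}=h\prod_{k=1}^{h-1}(1-\eta^k)^{-(\ga_i\,|\,M^k\ga_i)}$, and only then does inversion yield \eqref{ai1}.
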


\medskip

The first equation of the Kac--Wakimoto hierarchy reads
$$
\sum_{i=1}^N a_i = h^{-2} (\rho | \rho),
$$
which coincides with the value of $\sum_{i=1}^N \widetilde{a}_i$ (see
formula \eqref{ratios}). Hence in order to prove that
$a_i=\widetilde{a}_i$ it suffices to check that
$$
\frac{a_i}{a_j} = \frac{\widetilde{a}_i}{\widetilde{a}_j},
$$
where the right hand side is given by formula \eqref{ratios}. This has
been done for $A_N$, $D_4$ and $E_6$ in \cite{GM} and for $D_N$ in
\cite{Wu}. It would be interesting to obtain a direct uniform proof of
this equality which is not based on case-by-case calculations. It would
also be interesting to understand the meaning of the right hand side
of formula \eqref{ratios} with $H_1$ replaced by $H_m$.

\medskip

\noindent{\em Remark.}  The results of this section may be interpreted
in the context of the theory of twisted modules over vertex
algebras. Consider the lattice vertex algebra $V_Q$ corresponding to
the root lattice $Q$ of type $X_N$, where $X=ADE$. This vertex algebra
is isomorphic to the basic representation of the affine Kac--Moody
algebra $\widehat{\lieg}$ of type $X^{(1)}_N$ in the homogeneous
realization, viewed as a vertex algebra (see, e.g.,
\cite{Kac1}). Hence $\sigma$-twisted modules over $V_Q$, where
$\sigma$ is an automorphism of $\lieg$ preserving the corresponding
Cartan subalgebra, realize modules over the twisted affine Kac--Moody
algebra $\widehat\lieg_\sigma$ \cite{BK}. In particular, taking the
Coxeter transformation $M$ as the automorphism $\sigma$, we obtain the
basic representation of $\widehat\lieg_\sigma$ in the principal
realization. The corresponding twisted operators are equal to
$\Gamma^{\pm \ga_i}(\zeta)$ up to scalar multiples. The OPE between
the twisted vertex operators may be found from the OPE between the
corresponding untwisted vertex operators, and this gives a way to
compute these scalar multiples (see \cite{BK,L}).

This observation allows us to generalize the results of this paper to
the case of more general singularities. In this case the role of an
affine Kac--Moody algebra is played by the lattice vertex algebra
associated to the Milnor lattice of the singularity (the middle
cohomology of the generic fiber of the singularity equipped with the
intersection pairing). Using the Picard--Lefschetz periods associated
to the singularity, we define twisted vertex operators which realize
twisted modules over this lattice vertex algebra. In the next two
sections we describe this in the case of simple singularities, leaving
the general case for a subsequent paper.\qed

\sectionnew{Vertex operators from singularities}    \label{sing}

In this section we recall the setup of \cite{GM} and describe the
integrable hierarchies associated to simple singularities. We will
then identify them with the Kac--Wakimoto hierarchies in a uniform way
for all $ADE$ types.

\subsection{Periods associated to isolated critical points}

Suppose that we are given a polynomial $f:\C^{2l+1}\rightarrow \C$
which has an isolated singularity of multiplicity $N$ at the
origin. Denote by $H$ the local algebra
$\C[[x_1,\ldots,x_{2l+1}]]/(\d_{x_1}f,\ldots,\d_{x_{2l+1}}f)$ of the
singularity. We have $\dim H=N$.  Let the family $f_t, t\in \T\subset
\C^N$, be a miniversal deformation of $f$, i.e., $f_0=f$ and
$\partial_{t^a} f_t|_{t=0}, \ a=1,\dots, N$, represent a basis in $H$.
By picking a small ball $B_\rho^{2l+1}$ of dimension $2l+1$ in
$\C^{2l+1}$ centered at $0$, we can find sufficiently small disk
$B^{1}_\delta$ in $\C$ and ball $\T\subset\C^N$, so
that the fibers $f_t^{-1}(\gl)$, $(\gl,t)\in B_\delta^1\times \T$
intersect transversely the boundary of $B_\rho^{2l+1}$. We may, and
will, assume without loss of generality that the critical values of
$f_t$ are contained in a disk $B_{\delta_0}^1$ with radius
$\delta_0<1<\delta$.

Each tangent space $T_t\T$ is identified with the algebra of functions
on the critical scheme $Crit(f_t)$ by the map $\d_{t^a}\mapsto \d f/\d
t^a \ ({\rm mod }\ \d f_t/\d x_i) (1\leq a\leq N, 1 \leq i \leq
2l+1).$ The induced multiplication on $T_t\T$ is denoted by
$\bullet_t.$ Functions $f_t$ of the miniversal family restricted to
their critical schemes $Crit (f_t)\iso T_t\T$ define a vector field on
$\T$ denoted by $E$ and called the {\em Euler field}. Given a volume
form $\omega$ on $B_\rho^{2l+1}$, the following {\em residue pairing}
defines a non-degenerate bilinear pairing on $T\T$: \ben
(\d_{t^a},\d_{t^b})_t = \Big(\frac{1}{2\pi
i}\Big)^{2l+1}\int_{\Gamma_\ge} \frac{\partial_{t^a}f_t\
\partial_{t^b}f_t\ \omega} {\frac{\d f_t}{\d x_1}\ldots\frac{\d
f_t}{\d x_{2l+1}} } , \een where the integration cycle $\Gamma_\ge$ is
supported on $|\frac{\d f_t}{\d x_1}|= \ldots =|\frac{\d f_t}{\d
x_{2l+1}}|=\ge$.  According to K. Saito's theory of {\em primitive
forms} \cite{Sa}, there exists a volume form $\omega$ on
$B_{\rho}^{2l+1}$, possibly depending on $t\in \T$, such that the
residue pairing on $T\T$ is flat homogeneous (of certain degree) with
respect to the Euler vector field. Moreover, in the flat coordinate
system of the residue metric, the Gauss--Manin connections for various
period maps that one can associate to the miniversal deformation of a
singularity simultaneously assume a rather simple form. The resulting
datum defines on $\T$ a {\em conformal Frobenius structure} (in the
terminology of B. Dubrovin \cite{Db}). In the following paragraphs, we
introduce relevant notation. For the actual construction of the
Frobenius structures in singularity theory we refer the reader to
C. Hertling's book \cite{He}.

First, assuming that the form $\omega$ is fixed once and for all,
denote by $(\tau^1,\dots, \tau^N)$ a coordinate system on $\T$ which
is flat with respect to the residue metric, and write $\partial_a$ for
$\partial_{\tau^a}$. It follows from the homogeneity condition of the
metric that in a suitable flat coordinate system the Euler vector
field is the sum of a constant and linear vector fields:
\[ E=\sum (1-d_a) \tau^a \partial_{a} + \sum \rho_a \partial_a.\]
The constant part represents the class of $f$ in $H$, and the spectrum
of degrees $d_1,\dots, d_N$ ranges from $0$ to $\Delta$ (called the
{\em conformal dimension} of the Frobenius structure at hand) and
differs by a shift from the {\em Steenbrink spectrum} $\{ s_a \}$ of
the singularity, $s_a=d_a-1/2-\Delta/2+l$.

Let $V_{\gl,t} := f_{t}^{-1}(\gl) \cap B_{\rho}^{2l+1}$ denote the
Milnor fibers.  Choosing $(\gl,\tau)=(1,0)$ as a reference point in
$B_{\delta}^1\times \T$, pick a middle homology class $\varphi\in
H_{2l}(V_{(1,0)};\Z)\iso \Z^N$ in the Milnor lattice, and denote by
$\varphi_{\gl,t}$ its parallel transport (using the Gauss--Manin
connection) to the Milnor fiber $V_{\gl,t}$. Let $d^{-1}\omega$ mean
any $2l$-form whose differential is $\omega$. We can integrate
$d^{-1}\omega$ over $\varphi_{\gl,t}$ and obtain this way multivalued
functions of $\gl$ and $t$ ramified around the discriminant in
$B_{\delta}^1\times \T$ (over which Milnor fibers become singular). We
associate to $\varphi$ the following Picard--Lefschetz {\em period
vector} $I^{(k)}_\varphi(\gl,\tau)\in H\ (k\in \Z)$:
\beq\label{periods} (I^{(k)}_\varphi(\gl,\tau), \partial_a ):=
(2\pi)^{-l}\ (-\d_a)\ \d_\gl^{l+k}\ \int_{\varphi_{\gl,\tau}} \
d^{-1}\omega.  \eeq Note that this definition is consistent with the
operation of stabilization of singularities. Namely, adding the
squares of two new variables does not change the RHS since it is
offset by an extra differentiation $(2\pi)^{-1}\partial_{\gl}$. In
particular, this defines the period vector for negative values of $k$,
since we may take $k \geq -l$ in this formula with $l$ as large as
needed.
 
The period vectors \eqref{periods}, being flat sections of a
Gauss--Manin connection, satisfy a system of linear differential
equations (see \cite{He}, section 11) \beq\label{periods:de} \d_a
I^{(k)} = -(\d_a\bullet_t)\d_\gl I^{(k)},\ 1\leq a\leq N, \quad
(\gl-E\bullet_t) \d_{\gl}I^{(k)}= (\Theta-k-1/2) I^{(k)}, \eeq where
$\Theta: H\to H$ (called sometimes {\em Hodge grading operator} of the
Frobenius structure) is the operator anti-symmetric with respect to
the residue pairing and defined by
\[ \Theta (\partial_a) = \theta_a \partial_a, \ \
\theta_a=\frac{\Delta}{2}-d_a.\]
Using the last equation in (\ref{periods:de}) 
we analytically extend the period vectors to all 
$|\gl|>\delta$. It also follows from \eqref{periods:de} that the
period vectors have the following symmetry:
\ben
I^{(n)}_\varphi(\gl,\tau)\ = \ I^{(n)}_\varphi(0,\tau-\gl{\bf 1}),
\een
where $\tau \mapsto \tau-\gl{\bf 1}$ denotes the time-$\gl$
translation in the direction of the flat vector field ${\bf 1}$
obtained from $1\in H$.  (It represents the unit element for all the
products $\bullet_t$.)

\subsection{Vertex operators}

Let $\H:=H((z^{-1}))$ be the space of formal Laurent series in the
indeterminate $z^{-1},$ equipped with the following symplectic
structure: \ben \Omega(\f(z),\g(z)) = {\rm Res}_{z=0}
(\f(-z),\g(z))dz,\quad \f,\g\in \H.  \een Given a sum $\f=\sum
\f_kz^k$, possibly infinite in both directions, we define the vertex
operator \ben e^{\widehat{\f}}:=\exp \Big( \sum_{k\geq 0}
(-1)^{k+1}\sum_{a=1}^N(\f_{-1-k},[\psi_a])q_k^a/\sqrt{\hbar} \Big)
\exp\Big( -\sum_{k\geq 0}\sum_{a=1}^N
(\f_{k},[\psi^a])\sqrt{\hbar}\d_{q_k^a} \Big), \een where $\{ \psi_a
\}$ and $\{ \psi^a \}$, are dual bases in $H$,
$(\psi^a,\psi_b)=\delta^a_b$.  The vertex operator $e^{\widehat{\f}}$
acts on the Fock space of formal series in $q_k^a$ whose coefficients
are formal Laurent series in $\sqrt{\hbar}.$ Applying this
construction to the series
$$
\f^\varphi_\tau(\gl):=\sum_{k\in \Z} I^{(k)}_\varphi(\gl,\tau)(-z)^k,
$$
where $I^{(k)}_\varphi$ are the period vectors \eqref{periods}, we obtain 
vertex operators which will be denoted by $\Gamma_\tau^\varphi(\gl).$

Let $\ga \in H_{2l}(V_{1,0},\Z)$ be a vanishing cycle. Note that the
vertex operator $\Gamma_\tau^\ga(\gl)$ depends on the choice of the
path connecting $(1,0)$ with $(\gl,\tau)$. However, changing this path
corresponds to acting on $\ga$ by a monodromy transformation, which
gives another vanishing cycle $\ga'$. Thus, under this change of path
$\Gamma_\tau^\ga(\gl)$ becomes the vertex operator corresponding
to the vanishing cycle $\ga'$, so that the collection $\{
\Gamma_\tau^\ga(\gl) \}_{\ga \in H_{2l}(V_{1,0},\Z)}$ is
independent of any choices.

\begin{theorem}\label{t2}
Let $\ga,\gb\in H_{2l}(V_{1,0},\Z)$ be two vanishing cycles and $\tau\in
\T$ be an arbitrary point. The following OPE formula holds:
\beq\label{ope_ade} \Gamma^\ga_\tau(\gl) \Gamma_\tau^{\gb}(\mu) = \exp
\Big( \int_{\tau'+(\mu-u') {\bf 1}}^\tau I^{(0)}_\ga(\gl,t)\bullet_t
I^{(0)}_\gb(\mu,t) \Big) : \! \Gamma^\ga_\tau(\gl)
\Gamma_\tau^{\gb}(\mu) \! :\ , \eeq
where $(u',\tau')$ is a generic point in the discriminant. 
\end{theorem}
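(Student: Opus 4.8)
The plan is to compute the OPE in two stages: first establish the $\tau$-independence of the prefactor via the dependence of $\Gamma^\ga_\tau(\lambda)$ on $\tau$, then reduce to a point in the discriminant where the computation becomes tractable. For the first stage I would observe that the vertex operator $e^{\widehat{\f}}$ factors into a ``creation'' and an ``annihilation'' part, and that the prefactor in an OPE $\Gamma^\ga_\tau(\lambda)\Gamma^\gb_\tau(\mu) = B(\lambda,\mu,\tau)\, :\!\Gamma^\ga_\tau(\lambda)\Gamma^\gb_\tau(\mu)\!:$ is obtained by commuting the annihilation part of the left operator past the creation part of the right one. Using the symplectic form $\Omega$ and the duality of the bases $\{\psi_a\},\{\psi^a\}$, this prefactor is $\exp$ of a quadratic expression in the coefficients of $\f^\ga_\tau(\lambda)$ and $\f^\gb_\tau(\mu)$; concretely it should come out to $\exp$ of (the negative-$z$ part of) $\Omega$-type pairing of the two series, i.e. a residue $\operatorname{Res}_{z=0}(\f^\ga_\tau(\lambda,-z),\f^\gb_\tau(\mu,z))\,dz$ suitably truncated. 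The key differential-geometric input is equation \eqref{periods:de}: differentiating the period series $\f^\ga_\tau(\lambda)$ in $\tau^a$ replaces it by $-(\d_a\bullet_t)\d_\lambda$ applied to it, and this is exactly what is needed to show that $\d_a \ln B = (\d_a\bullet_t \text{-contraction of } I^{(0)}_\ga \text{ with } I^{(0)}_\gb)$ up to a total $\lambda$- or $\mu$-derivative that does not affect the residue. This identifies $\d_a \ln B(\lambda,\mu,\tau)$ with the integrand $I^{(0)}_\ga(\lambda,t)\bullet_t I^{(0)}_\gb(\mu,t)$ paired against $\d_a$, so that $\ln B$ is the line integral $\int^\tau I^{(0)}_\ga\bullet_t I^{(0)}_\gb$ up to a constant of integration independent of $\tau$.

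To pin down that constant of integration I would send $\tau$ toward a generic point $(u',\tau')$ of the discriminant. There the singularity $f_{\tau'}$ acquires, near the relevant critical point, a single non-degenerate critical point of value $u'$, so $f_{\tau'}$ locally looks like an $A_1$ singularity $\pm z^2 + \text{const}$. For an $A_1$ singularity the period vectors $I^{(k)}_\ga$ are completely explicit — they are (up to normalization and the stabilization shift) powers of $(\lambda-u)$ times the vanishing-cycle class, with $I^{(0)}_\ga(\lambda) \propto (\lambda-u)^{-1/2}$ on the nose. Hence the $A_1$ computation of the OPE prefactor reduces to the classical lattice-vertex-operator OPE: commuting the two Heisenberg exponentials gives $(1 - \mu/\lambda)^{\pm(\ga|\gb)}$-type factors, which after the change of variables $\lambda\mapsto\lambda-u'$, $\mu\mapsto\mu-u'$ matches $\exp\big((\ga|\gb)\log(\text{something})\big)$ and is readily seen to equal $\exp\big(\int^{\tau'+(\mu-u'){\bf 1}}_{\tau' + \cdots} I^{(0)}_\ga\bullet I^{(0)}_\gb\big)$ with the lower limit chosen so the integral vanishes — which is precisely why the stated lower endpoint of integration in \eqref{ope_ade} is $\tau'+(\mu-u'){\bf 1}$. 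Combining this base case with the $\tau$-derivative computation of the previous step yields the formula for all $\tau$.

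The main obstacle, I expect, is controlling the analytic behavior near the discriminant: the period vectors are multivalued and blow up as $\lambda$ approaches a critical value, and one must make sense of $\f^\ga_\tau(\lambda)$ as a (two-sided) formal Laurent series in $z$ whose coefficients are honest functions on a punctured neighborhood, check that the normally ordered product $:\!\Gamma^\ga_\tau(\lambda)\Gamma^\gb_\tau(\mu)\!:$ genuinely has a well-defined $\tau\to(u',\tau')$ limit as an operator on each Fock-space vector, and verify that the line integral $\int^\tau I^{(0)}_\ga\bullet_t I^{(0)}_\gb$ converges (or can be regularized) as the endpoint tends to the discriminant. One must also confirm that the quadratic pairing extracted from commuting the Heisenberg exponentials really is the $z$-residue of $(\f^\ga_\tau(\lambda,-z),\f^\gb_\tau(\mu,z))$ — this uses the anti-symmetry $\f(-z)$ built into $\Omega$ and the $(-1)^{k+1}$ signs in the definition of $e^{\widehat\f}$, and the matching of $\d_a\bullet_t$ appearing in \eqref{periods:de} with the product $\bullet_t$ appearing in the integrand of \eqref{ope_ade}. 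Once those analytic and bookkeeping points are settled, the structure of the argument — differentiate in $\tau$, integrate, fix the constant at an $A_1$ degeneration — is forced.
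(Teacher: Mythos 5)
Your first step (the OPE prefactor equals $\exp \Omega(\f^\ga_\tau(\gl)_+,\f^\gb_\tau(\mu)_-)$, obtained by commuting the annihilation part of the left operator past the creation part of the right one) and your overall skeleton (compute the $\tau$-differential of this exponent, then fix the constant of integration at the endpoint $\tau'+(\mu-u'){\bf 1}$) are exactly the paper's. But your execution has a genuine gap at two points. In the differential step there is no residue in $\gl$ or $\mu$ anywhere, so "up to a total $\lambda$- or $\mu$-derivative that does not affect the residue" is not a justification. The actual mechanism is that \eqref{periods:de} gives $\d_a I^{(n)} = -(\d_a\bullet_t)\,\d_\gl I^{(n)} = -(\d_a\bullet_t)\, I^{(n+1)}$, so differentiating $\sum_{n\ge 0}(-1)^{n+1}\big(I^{(n)}_\ga(\gl,\tau),I^{(-n-1)}_\gb(\mu,\tau)\big)$ shifts indices, and after the Frobenius property $\big((\d_a\bullet)x,y\big)=\big(x\bullet y,\d_a\big)$ the sum telescopes, leaving precisely $\big(I^{(0)}_\ga\bullet I^{(0)}_\gb,\d_a\big)d\tau^a$; nothing is discarded.

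More seriously, your way of pinning down the constant of integration --- degenerating to the discriminant and matching an explicit $A_1$ lattice-vertex-operator OPE with $(1-\mu/\gl)^{\pm(\ga\,|\,\gb)}$-type factors --- is not a valid computation of the prefactor and is also unnecessary. The exponent at the endpoint still involves the full tower of periods $I^{(n)}_\ga(\gl,\tau)$ of the cycle $\ga$, which does not vanish at the chosen discriminant point and is not governed by the local $A_1$ model there; product formulas of the form $(1-\mu/\gl)^{(\ga\,|\,\gb)}$ arise only at $\tau=0$ for weighted-homogeneous (e.g.\ simple) singularities and belong to the later identification of the coefficients (Lemmas \ref{B_i} and \ref{c1}), whereas Theorem \ref{t2} is stated for arbitrary isolated singularities. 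The correct and much simpler base case is this: by the translation symmetry $I^{(n)}_\gb(\mu,\tau)=I^{(n)}_\gb(0,\tau-\mu{\bf 1})$, at $\tau=\tau'+(\mu-u'){\bf 1}$ the negative-index periods of $\gb$ equal $I^{(-n-1)}_\gb(u',\tau')$, and these vanish because the cycle $\gb$ shrinks to a point there (locally they behave like $(\mu-u)^{n+1/2}$). Hence every term of $\Omega(\f^\ga_+,\f^\gb_-)$ vanishes at the endpoint, matching the trivial vanishing of the integral; no $A_1$ OPE and no limit of the normally ordered product need be controlled. The same local estimate, $I^{(0)}_\gb(\mu,t)\sim(\mu-u)^{-1/2}$, disposes of your worry about convergence of the line integral near the discriminant.
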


Here the period $I^{(0)}_\ga(\gl,t)$ should be expanded in a
neighborhood of $\gl=\infty$. Using the residue pairing, we interpret
the integrand in \eqref{ope_ade} as a formal Laurent power series in
$\gl^{-1}$ whose coefficients are multivalued 1-forms on $\T$. The
integration path $C$ is such that the corresponding path $(\mu,t),
t\in C$, does not intersect the discriminant, and the cycle $\gb\in
H_{2l}(V_{\mu,\tau},\Z)$ vanishes when transported to
$H_{2l}(V_{u',\tau'},\Z)$.

\proof It follows from the definition of normal ordering that in order
to prove \eqref{ope_ade} it is enough to show that the integral in the
exponent equals $\Omega(\f_\tau^\ga(\gl)_+,\f_\tau^\gb(\mu)_-),$ i.e.,
we need to show that this expression vanishes for
$\tau=\tau'+(\mu-u'){\bf 1}$ and that its differential equals the
integrand in \eqref{ope_ade}. The first of these two statements is
obvious because $I_\gb^{(-n-1)}(\mu,\tau')=0$ for $n\geq 0$ and
$\mu=u'$. For the differential we have \ben
d\Omega(\f_\tau^\ga(\gl)_+,\f_\tau^\gb(\mu)_-) = \sum_{n\geq 0}
\sum_{a=1}^N (-1)^{n+1}\d_a
\(I^{(n)}_\ga(\gl,\tau),I^{(-n-1)}_\gb(\mu,\tau)\)\, d\tau^a.  \een
Using the Leibniz rule and the differential equations
\eqref{periods:de} we get \ben \sum_{n\geq 0} \sum_{a=1}^N (-1)^n
\Big(\((\d_a\bullet)I_\ga^{(n+1)},I_\gb^{(-n-1)}\) +
\(I_\ga^{(n)},(\d_a\bullet)I_\gb^{(-n)}\)\Big)d\tau^a. \een Applying
the Frobenius property of the multiplication $\bullet$, we find that
this is equal to \ben =\sum_{n\geq 0} \sum_{a=1}^N (-1)^n \Big(
\(I_\ga^{(n+1)}\bullet I_\gb^{(-n-1)},\d_a \) + \(I_\ga^{(n)}\bullet
I_\gb^{(-n)},\d_a \)\Big)d\tau^a.  \een Finally, in the above sum,
with respect to the summation over $n$, all terms cancel except for
$\(I_\ga^{(0)}\bullet I_\gb^{(0)},\d_a \)d\tau^a.$ Formula
\eqref{ope_ade} follows.  \qed

\subsection{Simple singularities and the corresponding Hirota
  equations}

Consider the case of a simple singularity $f$ of type $X_N$, where
$X=ADE$. In this case the Milnor lattice (the middle homology $H_{2l}
(V_{1,0},\Z)$ of the reference Milnor fiber $V_{1,0}$) is identified
with the root lattice of the corresponding simple Lie algebra $\lieg$
of type $X_N$. The invariant inner product $(\cdot\, |\, \cdot)$ on
the root lattice is identified up to the sign $(-1)^l$ with the
intersection pairing on the Milnor lattice.  The monodromy group
acts on the Milnor lattice as the Weyl group. The roots are
identified with vanishing cycles, and the monodromy group action on
them is generated by reflections $\varphi\mapsto
\varphi-\langle\varphi,\alpha\rangle\alpha$ in the vanishing
cycles. The {\em classical monodromy operator} is by definition the
parallel transport of the cycles along the loop $\theta \mapsto
(\gl,\tau)=(\exp i \theta, 0)$ and hence coincides with a Coxeter
element $M$ of the root system.  We keep the notation $\ga_i$ of
Section 2 for the representatives in the orbits of $M$ in the set of
$Nh$ vanishing cycles.

In \cite{GM}, a Hirota bilinear equation was associated to a
simple singularity of type $ADE$ in the following form:
\beqa\label{sing_hirota} && 
\operatorname{Res}_{\gl=\infty}\frac{d\gl}{\gl} \sum_{\ga}
\widetilde{a}_{i} \ \Gamma^{\ga}_0(\gl)\otimes \Gamma^{-\ga}_0(\gl)\
\tau\otimes \tau = \frac{N(h+1)}{12}\ \tau\otimes \tau + \\ \notag
&& \sum_{k=0}^{\infty} \sum_{a=1}^N (m_a+kh)(q_k^a\otimes 1-1\otimes
q_k^a) (\d_{q_k^a}\otimes 1-1\otimes \d_{q_k^a})\ \tau\otimes \tau.  
\eeqa
The sum here is taken over all roots, but the coefficients
$\widetilde{a}_{i}$ are the same for all roots from the $M$-orbit of
$\ga_i$. They are defined by the formula \beq\label{a_alpha}
\widetilde{a}_i:=h \lim_{\ge\rightarrow 0} \exp\Big(- \int_{-{\bf
1}}^{\tau_0-\ge{\bf 1}} I^{(0)}_{\ga_i}(0,t)\bullet
I^{(0)}_{\ga_i}(0,t) - \int_{1}^{\ge}\frac{2d\xi}{\xi} - 4\ln 2\Big).
\eeq The integration here should be understood as follows. We pick a
generic point $\tau_0:=\tau'-u'{\bf 1}$ on the discriminant.  Since
$\ga_i$ is a vanishing cycle, there exists a path from $(1,0)$ to
$(\tau',u')$ such that $\ga_i$ vanishes when transported along it to
the end point. The integration is performed along such a path. For
more details, see \cite{GM}.

It is straightforward to check (see \cite{GM}, Section 8) that the
vertex operators $\Gamma^{\alpha_i}(\zeta)$ of representation theory
given by formula \eqref{vop_kw} turn into the vertex operators
$\Gamma_0^{\alpha_i}(\lambda)$ of singularity theory after the the
following rescaling of the dynamical variables: \beq
\label{rescaling}  
q_k^a=\sqrt{\hbar}\ \prod_{r=0}^k (m_a+rh)y_{m_a+kh}\ 
\eeq
and substitution $\lambda=\zeta^h/h$.

Note that
$$\lim_{\zeta\to w} (1-w^h/\zeta^h)/(1-w/\zeta)=h.$$
Using this, we find from Theorem \ref{t2} (applied at $\tau=0$) and
formulas \eqref{ai_limit1} and \eqref{ope_gamma1} that \beq
\label{tag13} a_i=h \lim_{\lambda\to \mu} (1-\mu/\lambda)^{-2}
\exp\left( - \int_0^{\tau_0+\mu{\mathbf 1}}
I_{\alpha_i}^{(0)}(\lambda,t)\bullet
I_{\alpha_i}^{(0)}(\mu,t)\right).\eeq Taking into account that
$\operatorname{Res} d\gl/\gl = \operatorname{Res} d\zeta/\zeta$, in
order to prove that $a_i=\widetilde{a}_i$ (which is the statement of
the Theorem in the Introduction) it
remains to identify this limit with the one in \eqref{a_alpha}.

\begin{lemma}\label{c1}
We have $a_i=\widetilde{a}_i$ for all $i=1,\ldots,N$.
\end{lemma}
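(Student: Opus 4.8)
The plan is to show that the limit appearing in \eqref{tag13} coincides with the one defining $\widetilde a_i$ in \eqref{a_alpha}. Both expressions involve the exponential of an integral of $I^{(0)}_{\ga_i}\bullet I^{(0)}_{\ga_i}$ along a Picard--Lefschetz path from the reference point to a generic point of the discriminant, so the two differ only in (i) the choice of endpoints and base points, and (ii) the precise regularization of the divergent $\gl\to\mu$ (resp. $\ge\to 0$) limit. The first step is to use the translation symmetry $I^{(n)}_\varphi(\gl,\tau)=I^{(n)}_\varphi(0,\tau-\gl\mathbf 1)$ recorded in Section 3.1 to rewrite the integral $\int_0^{\tau_0+\mu\mathbf 1} I^{(0)}_{\ga_i}(\lambda,t)\bullet I^{(0)}_{\ga_i}(\mu,t)$ with one of the two period factors frozen at its $0$-argument form; this matches the structure of \eqref{a_alpha}, where both factors are evaluated at $\gl=0$ and the $\ge$-dependence sits in the upper limit $\tau_0-\ge\mathbf 1$. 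Shifting the integration variable $t\mapsto t-\mu\mathbf 1$ turns the path in \eqref{tag13} from one running to $\tau_0+\mu\mathbf 1$ into one running to $\tau_0$, at the cost of changing $I^{(0)}_{\ga_i}(\lambda,t)$ into $I^{(0)}_{\ga_i}(\lambda-\mu,t)$.

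Next I would analyze the short-distance behavior. As $\lambda\to\mu$ (equivalently $\zeta\to w$, via $\lambda=\zeta^h/h$), the integrand develops a singularity precisely because the cycle $\ga_i$, which vanishes at the discriminant point, forces $I^{(0)}_{\ga_i}\bullet I^{(0)}_{\ga_i}$ to blow up logarithmically in the deformation parameter near $\tau_0$; this is the same mechanism that produces the $\int_1^\ge 2\,d\xi/\xi$ counterterm and the $4\ln 2$ in \eqref{a_alpha}. The key computation is the local normal form: near a generic discriminant point the singularity is an $A_1$, and for an $A_1$ singularity the period vector of the vanishing cycle is explicitly computable (it is essentially $(\gl-u)^{-1/2}$ up to normalization), so $I^{(0)}_{\ga_i}\bullet I^{(0)}_{\ga_i}$ along the relevant direction behaves like $\text{const}\cdot(\gl-u)^{-1}$. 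Carrying out this $A_1$ computation pins down both the power of $(1-\mu/\lambda)$ that must be stripped off and the exact numerical constant ($4\ln 2$, together with the change of variables between $\gl$ and $\xi$) that appears in \eqref{a_alpha}. After subtracting the divergent part, the remaining finite integral is the same regularized quantity in both formulas, and the prefactors $h$ match by construction.

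The main obstacle will be bookkeeping the regularization constants: verifying that the substitution $\lambda=\zeta^h/h$ together with $\lim_{\zeta\to w}(1-w^h/\zeta^h)/(1-w/\zeta)=h$ and the rescaling \eqref{rescaling} of the dynamical variables produces exactly the combination $h\lim(1-\mu/\lambda)^{-2}\exp(\cdots)$ in \eqref{tag13}, and then that the $A_1$ local model contributes precisely the $-\int_1^\ge 2\,d\xi/\xi-4\ln 2$ correction in \eqref{a_alpha} and nothing more. This is essentially the content of Section 8 of \cite{GM}, so I would cite that reference for the explicit $A_1$ period and the identification of the counterterms, and devote the body of the proof to checking that the two limits --- once the endpoints are aligned by the translation symmetry and the divergences are matched by the $A_1$ computation --- literally agree, whence $a_i=\widetilde a_i$ for every $i$.
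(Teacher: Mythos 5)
Your plan follows the paper's own argument: align the two regularized integrals using the translation invariance of the period vectors, localize the comparison at a generic ($A_1$) point of the discriminant, and let the explicit $A_1$ period computation account for both the power $(1-\mu/\lambda)^{2}$ to be stripped off and the constant $4\ln 2$, which is exactly how the paper proves $a_i=\widetilde{a}_i$. The only point you leave implicit, and which the paper spells out when checking that the limits $\epsilon\to 0$ and $\lambda\to\mu$ can be interchanged, is that besides the $A_1$-model term $du/\bigl(\sqrt{\lambda-\mu-u}\,\sqrt{-u}\bigr)$ the integrand also contains the mixed singular terms $\sqrt{\lambda-\mu-u}\,du/\sqrt{-u}$ and $\sqrt{-u}\,du/\sqrt{\lambda-\mu-u}$, which must be (and easily are) shown not to affect the order of passage to the limits.
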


\begin{proof}
Consider the family of integrals \beq \label{tag15} \int_{-{\mathbf
1}}^{\tau_0-\epsilon{\mathbf 1}}
I_{\alpha_i}^{(0)}(\lambda-\mu,t)\bullet I_{\alpha_i}^{(0)}(0,t) +
\int_1^{\lambda-\mu+\epsilon} \frac{2d\xi}{\xi}.\eeq Here $\tau_0$
lies on the discriminant and the first integral is computed along a
discriminant-avoiding path connecting the reference point $-{\mathbf
1}$ with a neighborhood of $\tau_0$ in such a way that the cycle
$\alpha_i$, transported along this path, vanishes as $\epsilon \to
0$. When $\lambda-\mu=0$, the integrals coincide with those in the
exponent of (\ref{a_alpha}), and altogether tend to $-\ln
(\widetilde{a}_i/h) - 4\ln 2$ (up to an integer multiple of $2\pi i$)
in the limit $\epsilon \to 0$.

Now set $\epsilon =0$ in (\ref{tag15}), and write the first integral
as the sum $\int_{-{\mathbf 1}}^{-\mu {\mathbf 1}}+\int_{-\mu {\mathbf
1}}^{\tau_0}$.  The second integral may be converted into the one in
the exponent of (\ref{tag13}) since the integrand is invariant under
the shifts $(\lambda, \mu, t) \mapsto (\lambda + c, \mu+c, t+c{\mathbf
1})$.  The first integral has a well-defined limit as $\mu \to
\lambda$, which is equal to $-\int_1^{\lambda} 2d\xi/\xi$. Indeed, in
this limit, and when $t=\xi {\mathbf 1}$, we have (see \cite{GM}) \beq
\label{tag18} 
I_{\alpha_i}^{(0)}(0,\xi{\mathbf 1})\bullet 
I_{\alpha_i}^{(0)}(0,\xi{\mathbf 1}) = 
-( \alpha_i\, |\, \alpha_i) \rangle \frac{d\xi}{\xi} =-2\frac{d\xi}{\xi}.
\eeq
Note that
\[ \exp \left(\int_1^{\lambda-\mu} \frac{2d\xi}{\xi}-\int_1^{\lambda} 
\frac{2d\xi}{\xi} \right) = (1-\mu/\lambda)^2.\] Thus, at
$\epsilon=0$, the integral (\ref{tag15}) tends in the limit
$\lambda-\mu \to 0$ to $-\ln (a_i/h)$.

Now write the first integral in (\ref{tag15}) as the sum
$\int_{-{\mathbf 1}}^{\tau}+ \int_{\tau}^{\tau_0-\epsilon {\mathbf
1}}$, where $\tau$ is any fixed point along the path. The first
summand depends continuously on $\lambda$, $\mu$ and $\epsilon$ up to
$\lambda=\mu=\epsilon=0$. Thus, in determining if the limits of
(\ref{tag15}) as $\epsilon \to 0$ and $\lambda-\mu \to 0$ commute, we
can replace the base point $-{\mathbf 1}$ by $\tau$, which can be
chosen to lie in a neighborhood of the discriminant point $\tau_0$.

Components of the period vector $I_{\alpha_i}^{(0)}(\lambda,t)$ near a 
non-degenerate critical point where the cycle $\alpha_i$ vanishes
are proportional to 
\[ \frac{1}{\sqrt{\lambda-u}}\left( 1+O(\lambda-u) \right),\]
where $u$ is the critical value, considered as a function of $t$
and taken equal $0$ at $t=\tau_0$. 
Thus, the integrand $I_{\alpha_i}^{(0)}(\lambda-\mu, t)\bullet 
I_{\alpha_i}^{(0)}(0,t)$
with small $\lambda-\mu$ has three types of singularities 
near $t=\tau_0$:
\[ \tag{16} \frac{\sqrt{\lambda-\mu-u}}{\sqrt{-u}}\ du, \ 
    \frac{\sqrt{-u}\ du}{\sqrt{\lambda-\mu-u}},\ \text{and}\ 
    \frac{du}{\sqrt{\lambda-\mu-u}\sqrt{-u}}.\]  
The first singularity is integrable, which makes the order of passing to
the limit irrelevant (indeed, $\int_{-\epsilon}^0$ tends to $0$ as 
$\epsilon \to 0$, uniformly in $\lambda-\mu$). 

The same is true
for the second singularity in (16), which reduces to the first one 
by integration by parts. 

The remaining case literally coincides with the integral (\ref{tag15})
for the $A_1$ singularity $x_1^2/2+x_2x_3+u=\lambda$. In this case the
result may be derived from \cite{GM}. However, for the sake of
completeness we give a direct proof here.

In the $A_1$ case, we have
\[ I^{(0)}_{\alpha}(\lambda,u)=\frac{\pm 2}{\sqrt{2(\lambda-u)}}.\]
Hence (\ref{tag15}) turns into
\[ \tag{17} 
\int_{-1}^{-\epsilon} \frac{4\ du}{\sqrt{2(\lambda-\mu-u)}\sqrt{2(-u)}}
+\int_{1}^{\lambda-\mu+\epsilon} \frac{2d\xi}{\xi}. \]
When $\lambda-\mu=0$, the integrals cancel (modulo $2\pi i {\mathbb Z}$).
On the other hand, when $\epsilon =0$, (17) becomes
\begin{align*} \left. \left. -2\ln \left(-u+\frac{\lambda-\mu}{2}+ 
\sqrt{(\lambda-\mu-u)(-u)}\right) \right|_{-1}^0+ 
2\ln \xi \ \right|_1^{\lambda-\mu} = \\ 
-2\ln\frac{\lambda-\mu}{2} +2\ln \left(1+\frac{\lambda-\mu}{2}+
\sqrt{1+\lambda-\mu}\right)+2\ln (\lambda-\mu).\end{align*}
In the limit $\lambda-\mu\to 0$ this tends to $4\ln 2$ as desired.
This completes the proof.
\end{proof}

This implies the main result of this paper (see the Theorem in the
Introduction): The hierarchy associated in \cite{GM} to a simple
singularity of type $X_N$, where $X=ADE$, coincides with the
Kac--Wakimoto hierarchy for the Lie algebra $X^{(1)}_N$.

\sectionnew{Period realizations of the basic representation}
\label{def}

In the previous section we made use of the vertex operators
$\Gamma^{\ga}_\tau$ defined in terms of the Picard--Lefschetz periods
of a singularity. In this section we will show in the case of simple
singularities how these operators may be used to construct
realizations of the basic representation of the corresponding affine
Kac--Moody algebra. In the special case $\tau=0$ of the unperturbed
singularity, the realization coincides, up to the change of variables
\eqref{rescaling}, with the principal realization we use in Section
2. Hence we obtain a family of deformations of this principal
realization. It is an interesting question whether this family has a
representation theoretic interpretation.

The affine Kac--Moody algebra $\widehat{\lieg}^M$($\cong
\widehat{\lieg}$) of type $X_N$ is spanned by $K$, $d$, $\varphi_{\pm
m},\ m\in E_{+}$, and $A_{\ga,m},\ m\in \Z$, where
\[ \varphi_{m} = \frac{1}{h}\sum_{k=1}^h M^k(\varphi t^m),\ \ \
A_{\ga,m} = \frac{1}{h}\sum_{k=1}^h M^k(A_{\ga} t^m).\] Here
$A_{\ga}\in \lieg_{\ga}$ ($\ga$ is a root), and $\varphi$ is any
element of the Cartan subalgebra $\lieh$. We continue to identify the
Coxeter transformation $M:\lieh \to \lieh$ with the operator of
classical monodromy on the homology group $H_{2l}(V_{1,0},\C)$, which
in its turn is identified by $\varphi \mapsto I^{(0)}_{\varphi}(1,0)$
(a period map composed with the residue pairing operator) with the
local algebra $H$ of the singularity.  We form generating functions
\[ \varphi (\zeta) :=\sum_{m\in E_{+}} \varphi_m \frac{\zeta^{-m}}{-m}+
\varphi_{-m}\frac{\zeta^m}{m},\ \ \ x_{\ga}(\zeta):=\sum_{m\in \Z}
A_{\ga,m} \zeta^{-m}.\]

For each $(\lambda,\tau)\in \C \times \T$, introduce an element of the
Heisenberg algebra $H[[z,z^{-1}]]$
\[ \f^{\varphi}_{\tau}(\lambda):=\sum_{n\in \Z}
I^{(n)}_{\varphi}(\gl,\tau) (-z)^n.\]
To specify the branch of the multivalued period vectors
$I^{(n)}_{\varphi}(\gl,\tau)$, we fix a path avoiding the discriminant
and such that it connects $(\gl,\tau)$ with $(1,0)$ and transports the
cycle $\varphi$ along it.  As functions of $\gl$ near $\gl=\infty$,
the periods expand in Laurent series in $\gl^{1/h}$. In what follows
we put
\[ \zeta = (h \gl)^{1/h}. \] 

Recall that our vertex operators are defined as quantizations of the
Heisenberg group elements $\Gamma_{\tau}^{\ga}(\lambda)=\ :\!
\exp(\widehat{\f}^{\ga}_{\tau}(\lambda) \! :$ (as in Section 3.2),
where $\ga$ is any root.

Introduce {\em phase factors}
\beq\label{phase_factor}
U_{\ga}(\gl,\tau)=
%\frac{1}{4}\, \lim_{\ge\rightarrow 0}\,
\exp\Big( \frac{1}{2}\int_{-{\bf 1}}^{\tau-\gl {\bf 1}}
I^{(0)}_{\ga}(0,t)\bullet_t I^{(0)}_{\ga}(0,t) 
%-\int_\gl^\ge \frac{d\xi}{\xi} 
\Big),\quad \tau\in \T. 
\eeq
The integrand is a covector on $\T$ applied at the point $t$ and
obtained, using the residue pairing identification $T^*_{t}\T \cong
T_{t}\T$, from the product $\bullet_t$ of two (equal) period vectors.
The 1-form consisting of these covectors is integrated along a path
(between the point $\tau-\gl {\mathbf 1}$ and the point $-{\mathbf
1}$).

Finally, introduce the {\em energy operator}
\[ l_{\tau}= z\d_z +\frac{1}{2}-\Theta +z^{-1}E\bullet_\tau \ : \H \to
\H,\] where $E$ is the Euler vector field, and $\Theta$ is the Hodge
grading operator (see Section 3.1).  The energy operator acts on the
symplectic loop space $\H=H((1/z))$ by infinitesimal symplectic
transformation, and so its quantization $\hat{l}_{\tau}$ acts on the
Fock space. We recall that for any infinitesimal symplectic
transformation, $A$, one expresses its quadratic Hamiltonian
$\frac{1}{2}\Omega(A\f,\f)$ in Darboux coordinates $\{ q_k^a,
p_{l,b}\, |\, a,b=1,\dots, N,\ k=0,1,2,\dots \}$ associated with the
polarization $\H = \H_{+}\oplus \H_{-}= T^*\H_{+}$ and then defines
$\widehat{A}$ in the standard way: \beq\label{repr} (q_k^aq_l^b)\sphat
= q_k^aq_l^b/\hbar,\ \ \ (q_k^ap_{l,b})\sphat = (p_{l,b}q_k^a)\sphat =
q_k^a\frac{\d}{\d q_l^b},\ \ \ (p_{k,a}p_{l,b})\sphat = \hbar
\frac{\d^2}{\d q_k^a\d q_l^b}.  \eeq

\begin{theorem}\label{def:basic_rep} 
There exist constants $c_{\ga}$ such that for each $\tau$ the
formulas: \ben && \varphi (\zeta)\ \mapsto \widehat{\bf
f}_{\tau}^{\varphi}(\gl),\\ && x_{\ga} (\zeta) \mapsto c_{\ga}\
\gl^{(\ga|\ga)/2}\ U_{ \ga}(\gl,\tau)\ \Gamma_\tau^{\ga}(\gl), \\ && K
\mapsto 1/h,\quad d\mapsto \widehat{l}_\tau\een define on the Fock
space a representation of the affine Kac--Moody algebra equivalent
to the basic representation of level 1.
\end{theorem}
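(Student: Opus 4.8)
The plan is to verify directly that the proposed operators satisfy the defining relations of $\widehat{\lieg}^M$, and then to recognize the resulting representation as the basic representation by uniqueness. The bosonic part is essentially already in place: since $\varphi \mapsto I^{(0)}_\varphi(1,0)$ identifies $\lieh$ with $H$ and the monodromy $M$ with the operator on $H$ induced by $\gl \mapsto e^{2\pi i}\gl$, the period vectors $\f^\varphi_\tau(\gl)$ have, by the differential equations \eqref{periods:de}, prescribed monodromy and homogeneity; matching the commutation relations of $\varphi(\zeta)$ against those of $\widehat{\bf f}^\varphi_\tau(\gl)$ reduces to the computation of the symplectic pairing $\Omega(\f^\varphi_\tau(\gl)_+, \f^\psi_\tau(\mu)_-)$, which is carried out exactly as in the proof of \thref{t2}. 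In particular the central term reproduces $K \mapsto 1/h$, and the relation $[d, X\,t^n] = n(X\,t^n)$ becomes the statement that $[\widehat{l}_\tau, \widehat{\bf f}^\varphi_\tau(\gl)]$ reproduces the $\zeta$-grading, which follows because $l_\tau$ implements, up to the correction $z^{-1}E\bullet_\tau$, the vector field $\gl\d_\gl$ acting on periods.

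The heart of the matter is the mixed bracket $[\varphi(\zeta), x_\ga(\zeta)]$ and the ``Serre-type'' relation encoded in $[x_\ga(\zeta), x_{-\ga}(w)]$ (and more generally in the operator product of $x_\ga$ with $x_\gb$ for two roots). For the first, one commutes $\widehat{\bf f}^\varphi_\tau(\gl)$ past the quantized vertex operator $c_\ga \gl^{(\ga|\ga)/2} U_\ga(\gl,\tau)\Gamma^\ga_\tau(\gl)$; the commutator is again governed by the symplectic pairing $\Omega(\f^\varphi_\tau, \f^\ga_\tau)$, and one has to check this equals $\langle \varphi, \ga\rangle$ times the appropriate factor — here the classical fact that the intersection pairing on the Milnor lattice equals $(-1)^l(\cdot|\cdot)$, together with the symmetry $I^{(n)}_\varphi(\gl,\tau) = I^{(n)}_\varphi(0,\tau-\gl{\bf 1})$, does the bookkeeping. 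For the second, one uses \thref{t2}: the OPE $\Gamma^\ga_\tau(\gl)\Gamma^{-\ga}_\tau(\mu)$ has the explicit exponential prefactor, and combining it with the phase factors $U_{\pm\ga}$, the powers $\gl^{(\ga|\ga)/2}\mu^{(-\ga|-\ga)/2}$, and the substitution $\zeta = (h\gl)^{1/h}$, one extracts — exactly as in \coref{ai_limit1} and \leref{B_i} in the $\tau=0$ case — a pole of order $(\ga|\ga)/2 = 1$ at $\gl = \mu$ whose residue is (a multiple of) the Cartan element $I^{(0)}_\ga$, i.e. $h\,\varphi(\zeta)$ evaluated appropriately. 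The $\tau$-dependence drops out of the singular part because the prefactor in \eqref{ope_ade} vanishes at the reference point on the discriminant; this is the mechanism that makes the relations hold \emph{uniformly in $\tau$}.

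There remains a purely algebraic point: the constants $c_\ga$. The operators $x_\ga(\zeta)$ must satisfy $[x_\ga, x_\gb]$-relations with structure constants matching those of $\widehat{\lieg}^M$ in the basis $\{A_{\ga,m}\}$, and these depend on the choice of root vectors $A_\ga \in \lieg_\ga$; the normalization of the OPE coefficient (which, at $\tau=0$, is $a_\ga$ up to the rescaling \eqref{rescaling}) must be absorbed into $c_\ga$. One defines $c_\ga$ so that at $\tau=0$ the map agrees, after \eqref{rescaling} and $\lambda=\zeta^h/h$, with the principal realization of Section~2 — this is possible by \coref{t3} and the discussion following it — and then the uniformity in $\tau$ established above shows the \emph{same} $c_\ga$ work for all $\tau$. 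Finally, equivalence with the basic representation of level $1$ is immediate once the relations are verified: the Heisenberg subalgebra $\lieh[t,t^{-1}]^M \oplus \C K$ acts on the Fock space $\C[q_k^a]$ as the standard level-$1$ Fock module, and by the theorem of \cite{KKLW} this module extends \emph{uniquely} to $\widehat{\lieg}^M$; hence any representation whose restriction is this Fock module, on which $K$ acts by $1/h$, is the basic representation. I expect the main obstacle to be the careful tracking of the phase factors $U_\ga$ and the half-integer powers $\gl^{(\ga|\ga)/2}$ through the OPE so that the $\tau$-dependent exponential cancels precisely against them in the singular part, leaving a $\tau$-independent residue — in other words, showing that the deformation is ``flat'' in the sense that it does not disturb the Lie-algebra relations; the intertwiners of \cite{GM} between the realizations at different $\tau$ are exactly what makes this work, and invoking them may streamline the argument considerably.
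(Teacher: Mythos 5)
Your plan --- a direct verification of the $\widehat{\lieg}^M$-relations via the OPEs of Theorem \ref{t2} --- is a genuinely different route from the paper's, but as written it has a gap at exactly the point where the theorem is hard. The paper does not verify the relations at general $\tau$ at all: it observes that at $\tau=0$ the formulas reduce (via \eqref{rescaling}, $\gl=\zeta^h/h$) to the principal realization of \cite{KKLW,KW}, and then conjugates by the quantized calibration $\widehat S_\tau$ (the gauge transformation with $S_\tau\nabla_0 S_\tau^{-1}=\nabla_\tau$), using $S_\tau\f^\varphi_0=\f^\varphi_\tau$, $\widehat l_\tau=\widehat S_\tau\widehat l_0\widehat S_\tau^{-1}$, and, crucially, Theorem A of \cite{GM}, which says precisely that $U_{\ga}(\gl,\tau)\Gamma^\ga_\tau(\gl)=\widehat S_\tau\,U_\ga(\gl,0)\Gamma^\ga_0(\gl)\,\widehat S_\tau^{-1}$. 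That last identity is the whole content of ``uniformity in $\tau$'': it is what shows that the specific phase factor \eqref{phase_factor} (and no other normalization) makes the same constants $c_\ga$ work for every $\tau$. In your proposal this is replaced by the assertion that ``the $\tau$-dependence drops out of the singular part because the prefactor in \eqref{ope_ade} vanishes at the reference point on the discriminant.'' That is not the right mechanism and not a proof: the prefactor does not vanish there (what vanishes at the discriminant is $\f^\gb_-$, which is what makes the formula in Theorem \ref{t2} well normalized); at general $\tau$ the OPE exponential differs from its $\tau=0$ value by a finite but $\tau$-dependent factor, and one must show that this factor is exactly absorbed by $U_\ga(\gl,\tau)U_\gb(\mu,\tau)$ together with the change of normal ordering from $\tau=0$ to $\tau$ --- i.e., one is forced either to redo the computation behind \cite{GM}, Theorem A, or to cite it, which is the paper's proof and which you mention only as an optional ``streamlining.''

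Two further points would have to be supplied before the direct approach closes. First, matching $[x_\ga,x_{-\ga}]$ is not enough: you must also check $[x_\ga,x_\gb]$ when $\ga+\gb$ is a root (identifying the first-order pole of the product with $c_{\ga+\gb}\,\gl^{(\ga+\gb|\ga+\gb)/2}U_{\ga+\gb}\Gamma^{\ga+\gb}_\tau$ with consistent signs, a cocycle condition on the $c_\ga$), and $[\widehat l_\tau,\,\cdot\,]$ with the deformed vertex operators, which uses the full system \eqref{periods:de}, not just homogeneity; none of this is addressed. Second, a detail: the singularity of $\Gamma^\ga_\tau(\gl)\Gamma^{-\ga}_\tau(\mu)$ at $\gl=\mu$ is a pole of order $(\ga|\ga)=2$, not $(\ga|\ga)/2=1$ --- the central term $K$ sits in the double pole and the Cartan current $\widehat\f^{\,\ga}_\tau$ in the residue, exactly as in Lemma \ref{kw_ope} and Corollary \ref{ai_limit1} at $\tau=0$. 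Your concluding reduction (KKLW uniqueness of the extension of the level-one Heisenberg Fock module, so that any representation restricting to it is the basic one) is fine and is also how the paper identifies the representation once the relations are known.
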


\proof When $\tau=0$, the formulas coincide (up to the rescaling
\eqref{rescaling}) with the principal realization of the
basic representation of level 1 \cite{KKLW,Kac,KW}.

To derive the theorem for an arbitrary $\tau$, we introduce
intertwining operators $\widehat{S}_\tau$ as quantizations of a
certain symplectic transformations \beq\label{gage}
S_\tau(z)=1+S^{(1)}_{\tau} z^{-1}+S^{(2)}_{\tau}z^{-2}+\cdots,\ \ \
S^{(k)}_{\tau}\in {\rm End}(H),\ \ S_{\tau}^*(-z)S_{\tau}(z)=1.  \eeq
By definition, $\widehat{S}_\tau=\exp^{\widehat{\ln S}_{\tau}}$.

The series $S_\tau$ (also known as {\em calibration} of the
corresponding Frobenius structure) is defined as follows. We introduce
one more period vector, $J(\tau,z)$, corresponding to the {\em complex
oscillating integrals}
\[
(J(\tau,z),\partial_a)= (2\pi z)^{-l-\frac{1}{2}}\ (z\partial_a)\ 
\int e^{f_\tau(x)/z}\omega,\ \ \ 1\leq a\leq N.\]
The cycles of integration here are ``Lefschetz' timbles,''
i.e., relative homology classes of $\C^{2l+1}$ modulo $\left({\rm
Re}(f_\tau/z)\right)^{-1}[-R,-\infty)$ in the limit $R\to \infty$.

When $\omega$ is a primitive volume form, the oscillating integrals
(which are related to the periods $I^{(k)}_{\varphi}(\gl,\tau)$ by
Laplace-like transforms in $\gl$, see for instance \cite{GM}) satisfy
the following system of differential equations:
\[
z\d_a J(\tau,z) = (\d_a\bullet_\tau)J(\tau,z), \ 1\leq a\leq N,\ \ \ 
(z\d_z+E)J=\Theta J,
\]
where $\Theta$ is the Hodge grading operator. Thus, $J$ is a
fundamental solution of a flat connection on the vector bundle with
the fiber $H$ over $(z,\tau)\in (\C-0)\times \T$. The second equation,
which expresses homogeneity properties of oscillating integrals (where
$\deg z=1$), may be rewritten as
\[ \nabla_{\tau} J =0,\ \ \text{where}\ \ 
\nabla_\tau = \d_z + z^{-2}E\bullet_\tau - z^{-1}\Theta.  
\]
One may think of $\nabla_{\tau}$ as an {\em isomonodromic} family of
connection operators over $\C-0$, parametrized by $\tau \in \T$.  The
operators $S_\tau$ are defined as gauge transformations of the form
\eqref{gage} conjugating $\nabla_\tau$ and $\nabla_0 =
\d_z-z^{-1}\Theta$:
\[ \nabla_{\tau} = S_{\tau} \nabla_0 S_{\tau}^{-1}.\]
In the $ADE$ case, one can show that $S_\tau$ satisfying the
initial condition $S_0=1$ exists and is unique. It follows that 
\[
l_{\tau} = S_\tau\, l_0\, S_\tau^{-1},\ \ \mbox{and}\ \ 
z\d_a S_\tau = (\d_a\bullet_\tau)S_\tau,\ 1\leq a\leq N. 
\]
Note that the quantization \eqref{repr} defines a representation of
the Poisson algebra consisting of quadratic Hamiltonians involving
only $pq$ and $q^2$ terms. Both $\widehat{\ln S}_{\tau}$ and
$\widehat{l}_{\tau}$ are obtained by quantizing such
Hamiltonians. Therefore, $\widehat{l}_\tau = \widehat{S}_\tau
\widehat{l}_0\widehat{S}_\tau^{-1}$.

Furthermore, $S_\tau\f_0^\varphi = \f_\tau^\varphi$. Indeed, both
sides satisfy the same differential equation with respect to $\tau$
and the same initial condition at $\tau=0$. Therefore the vertex
operators $\Gamma_\tau^\ga$ and $\Gamma_0^\ga$, being elements of the
Heisenberg group corresponding to the elements $\f_\tau$ and $\f_0$ of
the Heisenberg algebra, are conjugated by $\widehat{S}_{\tau}$ up to
certain scalar factors.  The precise values of the factors have been
found in \cite{GM}, Section 5, Theorem A: \ben
U_{\ga}(\gl,\tau)\Gamma_\tau^\ga(\gl) = \widehat{S}_\tau \
U_{\ga}(\gl,0)\Gamma_0^\ga(\gl)\ \widehat{S}^{-1}_\tau.  \een Thus,
$\widehat{S}_\tau^{-1}$ intertwines the operators described in the
theorem with the operators defining the level 1 basic representation
of the affine Kac--Moody algebra. \qed

\end{document}